\newtheorem{Theorem}{Theorem}[section]
\newtheorem{Corollary}[Theorem]{Corollary}
\newtheorem{Proposition}[Theorem]{Proposition}
 { \theoremstyle{definition}

 }
\numberwithin{equation}{section}
\newcommand{\diag}{\mathrm{diag}}
\newcommand{\Sym}{\mathrm{Sym}}
\newcommand{\Exp}{\mathrm{Exp}}
\def\O{\mathcal{O}}
\newcommand{\vir}{\mathrm{vir}}
\newcommand{\ch}{\mathrm{ch}}
\def\C{\mathbb C}
\def\Q{\mathbb Q}
\def\Z{\mathbb Z}
\def\P{\mathbb P}
\def\1{\mathbbm 1}
\begin{document}
\allowdisplaybreaks

\newcommand{\arXivNumber}{2201.07392}

\renewcommand{\thefootnote}{}

\renewcommand{\PaperNumber}{078}

\FirstPageHeading

\ShortArticleName{K-Theoretic Descendent Series for Hilbert Schemes of Points on Surfaces}

\ArticleName{K-Theoretic Descendent Series for Hilbert Schemes\\ of Points on Surfaces\footnote{This paper is a~contribution to the Special Issue on Enumerative and Gauge-Theoretic Invariants in honor of Lothar G\"ottsche on the occasion of his 60th birthday. The~full collection is available at \href{https://www.emis.de/journals/SIGMA/Gottsche.html}{https://www.emis.de/journals/SIGMA/Gottsche.html}}}

\Author{Noah ARBESFELD}

\AuthorNameForHeading{N.~Arbesfeld}

\Address{Department of Mathematics, Huxley Building, Imperial College London, London SW7 2AZ, UK}
\Email{\href{mailto:n.arbesfeld@imperial.ac.uk}{n.arbesfeld@imperial.ac.uk}}
\URLaddress{\url{math.columbia.edu/~nma/}}

\ArticleDates{Received January 28, 2022, in final form October 03, 2022; Published online October 16, 2022}

\Abstract{We study the holomorphic Euler characteristics of tautological sheaves on Hilbert schemes of points on surfaces. In particular, we establish the rationality of K-theoretic descendent series. Our approach is to control equivariant holomorphic Euler characteristics over the Hilbert scheme of points on the affine plane. To do so, we slightly modify a~Mac\-do\-nald polynomial identity of Mellit.}

\Keywords{Hilbert schemes; tautological bundles; Macdonald polynomials}

\Classification{14C05; 14C17; 05E05}

\renewcommand{\thefootnote}{\arabic{footnote}}
\setcounter{footnote}{0}

\section{Introduction}
\subsection{K-theoretic descendent series}

Let $S$ be a nonsingular projective algebraic surface. For $n\geq 0$, let $S^{[n]}$ denote the Hilbert scheme of $n$ points on $S$.

The Hilbert schemes carry natural K-theory classes induced from classes on $S$. Let $\Sigma_n\subset S^{[n]}\times S$ denote the universal family and $\pi_{S^{[n]}}$ and $\pi_S$ denote the projections from $S^{[n]}\times S$ onto the corresponding factors. A class $\alpha\in K(S)$ induces a tautological class $\alpha^{[n]}\in K(S^{[n]})$ given~by \begin{gather}\label{tautdef}
\alpha^{[n]}={\pi_{S^{[n]}}}_*(\mathcal{O}_{\Sigma_n}\otimes \pi_S^*\alpha ).
\end{gather}

In this paper, we study the structure of holomorphic Euler characteristics of tautological classes. Namely, we consider the following {\em K-theoretic descendent series}: for classes $\alpha_1,\dots,\alpha_l \in K(S)$ and integers $k_1,\dots,k_l\geq 0$, set \begin{align}\label{projseries}
Z_{S}(\alpha_1,\dots,\alpha_l\mid k_1,\dots,k_l)=\sum_{n\geq 0} q^n \chi\big(S^{[n]}, \wedge^{k_1} \alpha^{[n]}_1 \otimes \dots \otimes \wedge^{k_l} \alpha_l^{[n]} \big)\in \Q[[q]].
\end{align}
Our first result is a positive answer to Question 5 of~\cite{AJLOP}.

\begin{Theorem}\label{descrat}
The series $Z_{S}(\alpha_1,\dots,\alpha_l\mid k_1,\dots,k_l)$ is the Laurent expansion of a rational function $F(q)/(1-q)^{\chi(\mathcal{O}_S)},$ where $F(q)$ is a polynomial of degree at most $k_1+\cdots+k_l$.
\end{Theorem}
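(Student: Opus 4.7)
The strategy is to reduce the computation on a general projective surface $S$ to an equivariant Euler characteristic calculation on $(\C^2)^{[n]}$, and then to evaluate the resulting partition sum via a Macdonald polynomial identity.

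The first step is a K-theoretic universality result in the spirit of Ellingsrud--G\"ottsche--Lehn: $Z_S(\alpha_1,\dots,\alpha_l\mid k_1,\dots,k_l)$ depends on the input only through the ranks $\rk \alpha_i$ and the numerical intersection data $c_1(\alpha_i)\cdot c_1(\alpha_j)$, $c_1(\alpha_i)\cdot K_S$, $K_S^2$, $c_2(S)$, $\chi(\mathcal{O}_S)$, with coefficients given by universal power series in $q$ depending only on the integers $k_1,\dots,k_l$. Taking logarithms, $\log Z_S$ becomes a linear combination of these numerical invariants with universal series as coefficients; it is then enough to show that the coefficient series paired with $\chi(\mathcal{O}_S)$ produces the factor $-\log(1-q)$, while every other coefficient series is a polynomial in $q$ of appropriately bounded degree.

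The second step is to compute these universal series by equivariant localization on $(\C^2)^{[n]}$. Under the natural torus action of $T=(\C^*)^2$, the fixed points are isolated and indexed by partitions $\lambda\vdash n$; the $T$-character of $\alpha^{[n]}|_{I_\lambda}$ admits a well-known combinatorial expression in terms of the cells of $\lambda$. Equivariant Riemann--Roch then writes the relevant Euler characteristic as a sum over partitions of explicit rational functions in $q$ and in the equivariant parameters $t_1,t_2$. After a suitable substitution, each summand is identified with a pairing of modified Macdonald polynomials.

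The third and central step is to evaluate the partition sum. Here a slightly modified version of a Macdonald polynomial identity due to Mellit closes up the sum into an explicit rational function. The delicate points to extract are that (a) the denominator is exactly a power of $(1-q)$ with exponent matching $\chi(\mathcal{O}_S)$ after universality, and (b) the numerator polynomial has degree at most $k_1+\cdots+k_l$. The degree bound is where I expect the main difficulty to lie: it requires carefully tracking the contributions of the exterior powers $\wedge^{k_i}$ through the Macdonald pairing, and identifying the precise variant of Mellit's identity that yields this bound on the nose. Once both properties are verified at the level of universal series, re-assembling them via Step 1 gives the statement for arbitrary projective $S$.
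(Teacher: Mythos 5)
Your overall strategy---EGL universality to reduce to toric $S$ with equivariant $\alpha_i$, localization on $(\C^2)^{[n]}$ with fixed points indexed by partitions, and a modified Macdonald identity of Mellit---is exactly the route the paper takes. But there is a genuine gap at the step you yourself flag as the expected difficulty, and the mechanism is not what you describe: Mellit's identity does not ``close up the sum into an explicit rational function.'' It is a symmetry statement, namely that
\begin{gather*}
\Exp\bigg[\frac{Y}{(1-t_1)(1-t_2)}\bigg]\sum_{\lambda}\frac{H_{\lambda}[X]}{C_{\lambda}}\,\Exp\biggl[{-}Y\sum_{(c_1,c_2)\in\lambda}t_1^{c_1}t_2^{c_2}\biggr]
\end{gather*}
is unchanged under exchanging $X$ and $Y$ (this follows from self-adjointness of $\nabla U^{*}U\nabla$ for the Macdonald scalar product together with the Garsia--Haiman--Tesler formula for $\nabla U^{*}UH_{\lambda}$). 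The localization sum for $\hat{Z}_{\C^2}$ is the specialization $X=q$, $Y=\sum_j m_j\chi(\gamma_j|_0)$ of this expression, up to a prefactor. After the swap, each $\lambda$-summand becomes $H_{\lambda}\big[\sum_j m_ju_j\big]\prod_{(c_1,c_2)\in\lambda}\big(1-qt_1^{c_1}t_2^{c_2}\big)/C_{\lambda}$, and both delicate points become immediate: $H_{\lambda}$ is homogeneous of degree $|\lambda|$, so only partitions with $|\lambda|\le k_1+\cdots+k_l$ contribute to the coefficient of $m_1^{k_1}\cdots m_l^{k_l}$, and each such summand has $q$-degree at most $|\lambda|$. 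There is no ``careful tracking of the exterior powers through the pairing''; the degree bound falls out of homogeneity once the swap is made, and without identifying this swap your third step does not go through. Note also that the denominator $(1-q)^{\chi(\O_S)}$ is not produced by the Macdonald identity at all: it is the $t_1,t_2\to 1$ limit of the product over toric fixed points of the prefactors $\Exp\big[q/((1-w_{i_1})(1-w_{i_2}))\big]$.

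A smaller caution on your first step: the EGL factorization applies to the full multiplicative generating function $\hat{Z}_S(\alpha_1,\dots,\alpha_l)$ in the auxiliary variables $m_1,\dots,m_l$, not to the individual series $Z_S(\cdots\mid k_1,\dots,k_l)$, so ``taking logarithms'' must happen at the level of $\hat{Z}_S$. The individual descendent series is then an $m$-coefficient of a product of universal series, hence a universal \emph{polynomial} (not linear) expression in the Chern numbers; the reduction to toric surfaces goes through the observation that a polynomial vanishing on all toric Chern data vanishes identically, rather than by matching linear coefficients.
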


In anticipation of applications to descendent series for Quot schemes
on simply-connected surfaces of geometric genus~0 (see~\cite[Section~3.4]{AJLOP}), we also record the following partial generalization in the case where one of the~$\alpha_i$ is the class of a line bundle.

\begin{Theorem}\label{lbdescrat}
If $\alpha_1$ is the class of a line bundle and $k_2,\dots,k_l\in \Z$ are fixed, then
\begin{align}\label{lbspec}
\sum_{k_1=0}^{\infty} (-m)^{k_1} Z_{S}(\alpha_1,\dots,\alpha_l\mid k_1,\dots,k_l)
\end{align}
is the Laurent expansion of a rational function $G(q,m)\cdot (1-qm)^r/(1-q)^{\chi(\mathcal{O}_S)},$ where $G(q,m)$ is a polynomial and $r\in \Z$.
\end{Theorem}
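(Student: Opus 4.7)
The starting point is to repackage the sum over $k_1$ using the lambda-ring structure on $K\bigl(S^{[n]}\bigr)$: since $\alpha_1$ is a line bundle class, $\alpha_1^{[n]}$ has rank $n$, the exterior power $\wedge^{k_1}\alpha_1^{[n]}$ vanishes for $k_1>n$, and
\[
\sum_{k_1=0}^{\infty}(-m)^{k_1}\wedge^{k_1}\alpha_1^{[n]}=\Lambda_{-m}\bigl(\alpha_1^{[n]}\bigr)\in K\bigl(S^{[n]}\bigr)[m].
\]
Thus \eqref{lbspec} equals the single $m$-dependent descendent-type series
\[
\sum_{n\geq 0}q^n\chi\bigl(S^{[n]},\Lambda_{-m}\bigl(\alpha_1^{[n]}\bigr)\otimes\wedge^{k_2}\alpha_2^{[n]}\otimes\cdots\otimes\wedge^{k_l}\alpha_l^{[n]}\bigr),
\]
and the goal is to prove its rationality in $q$ and $m$ with the claimed denominator.

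The next step parallels the proof of Theorem \ref{descrat}: invoke universality to reduce to an equivariant holomorphic Euler characteristic calculation on $\bigl(\C^2\bigr)^{[n]}$, which by localization becomes a sum over torus-fixed points indexed by partitions $\lambda$ of $n$. At the fixed point $I_\lambda$, the fiber of $\alpha_1^{[n]}$ decomposes as $\sum_{\square\in\lambda}t^{w(\square)}$, where each weight $t^{w(\square)}$ is determined by the content of the box together with the equivariant first Chern class of $\alpha_1$. Because $\Lambda_{-m}$ is multiplicative on direct sums,
\[
\Lambda_{-m}\bigl(\alpha_1^{[n]}\bigr)\big|_{I_\lambda}=\prod_{\square\in\lambda}\bigl(1-m\,t^{w(\square)}\bigr),
\]
a product-over-boxes expression of precisely the shape that Macdonald polynomial identities are adapted to handle.

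With this reformulation in hand, the plan is to substitute the above box-product into the modification of Mellit's identity that was established for Theorem \ref{descrat}, now carrying the extra parameter $m$, and read off rationality. The denominator $(1-q)^{\chi(\mathcal{O}_S)}$ transfers from that theorem, since it arose from the universal series in $q$ alone. The new factor $(1-qm)^r$ is expected to emerge as the only additional singular locus in $m$, arising from a resonance between $q$ and $m$ in the principal specialization of the Macdonald side, analogous to how factors $(1-q^k m)$ appear in standard principal specialization formulas. The main obstacle will be to rule out spurious poles in $m$ at other values after specializing the equivariant parameters to Chern data of $S$ and $\alpha_1$. Controlling this should require a careful refinement of the modified Mellit identity tracking how the box-product $\prod_\square(1-m\,t^{w(\square)})$ interacts with the principal specialization used in the proof of Theorem \ref{descrat}, and demonstrating that only the resonance $qm=1$ survives; the integer $r$ will then fall out of counting the vanishing order at that locus.
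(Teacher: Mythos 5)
Your setup is faithful to the paper's: you correctly repackage the $k_1$-sum as $\wedge^{\bullet}_{m}\alpha_1^{[n]}$, reduce via universality and localization to a sum over partitions on $\big(\C^2\big)^{[n]}$, and observe that the line-bundle hypothesis makes the fiber of $\wedge^{\bullet}_m\alpha_1^{[n]}$ at $p_\lambda$ a product $\prod_{\square\in\lambda}\big(1-m\,t^{w(\square)}\big)$ over boxes. But the argument stops exactly where the real work begins: the passage ``substitute the box-product into the modified Mellit identity and read off rationality,'' together with ``the main obstacle will be to rule out spurious poles in $m$,'' is a description of what must be proved, not a proof. The missing idea is concrete. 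Because $u_1=\chi(\alpha_1|_0)$ is a single monomial, the weight $q^{|\lambda|}$ of the fixed point combines with your box-product into a single Macdonald evaluation
\begin{gather*}
q^{|\lambda|}\prod_{(c_1,c_2)\in\lambda}\big(1-m\,u_1t_1^{c_1}t_2^{c_2}\big)=H_{\lambda}[q-qmu_1],
\end{gather*}
by homogeneity and the evaluation formula $H_\lambda[1-x]=\prod(1-xt_1^{c_1}t_2^{c_2})$ (which requires $x$ to be a monomial --- this is precisely where the line-bundle hypothesis enters). The paper then feeds $X=q-qmu_1$ and $Y=\sum_{j\geq 2}m_ju_j$ into the two slots of the reproducing kernel and invokes the self-adjointness symmetry of Proposition~\ref{Macsym} to \emph{swap} $X$ and $Y$. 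After the swap, the partition sum truncates to $|\lambda|\leq k_2+\cdots+k_l$ for fixed $k_2,\dots,k_l$, and every occurrence of $m$ in a denominator is visibly of the form $\big(1-qmu_1t_1^{c_1}t_2^{c_2}\big)$. Without this swap, the fixed-point sum is an infinite series over all $\lambda$ and there is no mechanism for ``reading off'' rationality.

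Two further points your plan glosses over. First, the factor $(1-qm)^{r}$ does not arise only from a ``resonance on the Macdonald side'': a definite part of it is the global prefactor $\Exp\big[(q-qmu_1)/((1-t_1)(1-t_2))\big]$, whose product over the torus-fixed points of $S$ specializes to $(1-qm)^{\chi(\alpha_1)}/(1-q)^{\chi(\mathcal{O}_S)}$ via equivariant localization on $S$ --- this is where the exponent $\chi(\alpha_1)$ (hence much of $r$) comes from, not from pole-counting. Second, the elimination of spurious poles is not a refinement of the Macdonald identity but a separate (and fairly soft) argument: since each $q^km^{k'}$-coefficient of the global series is a priori finite at $t_1=t_2=1$, an induction on degree shows the factors $(1-v)$ in the denominator cancel, and the surviving factors $(1-qmv')$ all become $(1-qm)$ under $t_1=t_2=1$. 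You correctly identified this as the obstacle, but the resolution is this specialization argument, not further Macdonald combinatorics. As written, the proposal records the correct strategy and the correct difficulties but does not supply the central step that makes the theorem true.
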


\subsection{Examples}
The simplest example
\begin{gather*}
Z_{S}(\varnothing\mid\varnothing)= \sum_{n\geq 0} q^n \chi\big(S^{[n]},\O_{S^{[n]}}\big)=\frac{1}{(1-q)^{\chi(\mathcal{O}_S)}}
\end{gather*}
 of Theorem~\ref{descrat} is computed in~\cite[Proposition 3.3(b)]{G}.
Let $\alpha\in K(S)$. The second simplest instance of Theorem~\ref{descrat} is the identity
\begin{gather*}
Z_{S}(\alpha\mid 1)=\sum_{n\geq 0} q^n \chi\big(S^{[n]},\alpha^{[n]}\big)=\frac{ \chi(\alpha) q}{(1-q)^{\chi(\mathcal{O}_S)}},
\end{gather*}
which is a consequence of~\cite[Corollary~1.3]{D} when $S$ is Fano and $\alpha$ is the class of an ample or trivial line bundle. The identity for arbitrary $S$ and $\alpha$ then follows from~\cite[Theorem~4.2]{EGL}.

The complexity of the numerators of the rational functions (\ref{projseries}) can grow as $l$, $k_i$ and the ranks of $\alpha_i$ increase. For example, if $\mathcal{L}\in K(S)$ is a line bundle, then by~\cite[Theorem~5.25]{Sc2},
\begin{align*}
Z_S(-\mathcal{L}\mid 3)&=-\sum_{n\geq 0} q^{n}\chi\big(S^{[n]},\Sym^3 \mathcal{L}^{[n]}\big)
\\
&= \frac{\big(\chi\big(\mathcal{L}^{\otimes 2}\big)\chi(\mathcal{L})-\chi\big(\mathcal{T}^*S\otimes \mathcal{L}^{\otimes 3}\big) \big)\big(q^3-q^2\big)+\chi\big(\mathcal{L}^{\otimes 3}\big) \big(q^2-q\big)-\binom{\chi(\mathcal{L})+2}{3} q^3}{(1-q)^{\chi(\mathcal{O}_S)}}.
\end{align*}
Other computations of examples of K-theoretic descendent series (\ref{projseries}) can be found in~\cite[Section~6]{A},~\cite[Corollary~8.11]{K},~\cite[Section~5]{Sc1},~\cite[Theorem~5.25]{Sc2} and~\cite[Section~7]{Z}.

This paper's approach can be used to produce new formulas for descendent series. For example, we compute the following in Section~\ref{compdesc}.
\begin{Proposition} \label{newdesc}
If $\mathcal{V}\in K(S)$ is a rank $3$ vector bundle, then
\begin{align*}
Z_{S}(\mathcal{V}|3) = \frac{1}{(1-q)^{\chi(\mathcal{O}_S)}} \bigg(&\big(\chi\big(\mathcal{T}^{*}S\otimes \wedge^3 \mathcal{V}\big)+\chi\big({\wedge}^2\mathcal{V}\otimes \mathcal{V}\big)-\chi\big({\wedge}^2\mathcal{V}\big)\chi( \mathcal{V})\big)\big(q^3-q^2\big)
\\
&+\chi\big({\wedge}^3\mathcal{V}\big)\big(q-q^3\big)+\binom{\chi(\mathcal{V})}{3} q^3\bigg).
\end{align*}
\end{Proposition}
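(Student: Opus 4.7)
The plan is to combine the rationality statement of Theorem~\ref{descrat} with the equivariant-localization machinery on the Hilbert scheme of the affine plane that underlies it. By Theorem~\ref{descrat}, there is a polynomial $F(q)$ of degree at most $3$ with
$$ Z_S(\mathcal{V}\mid 3) = \frac{F(q)}{(1-q)^{\chi(\mathcal{O}_S)}}. $$
Since $S^{[0]}$ is a point and $\wedge^3 \mathcal{V}^{[0]} = 0$ in $K$-theory, comparing constant terms forces $F(0)=0$, so $F(q)=a_1q+a_2q^2+a_3q^3$ and only three scalars remain to be determined.

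By the universality theorem of Ellingsrud--G\"ottsche--Lehn \cite{EGL}, each $\chi(S^{[n]},\wedge^3\mathcal{V}^{[n]})$ is a universal polynomial in the Chern numbers of $\mathcal{V}$ and $S$, depending only on $\rk\mathcal{V}=3$. Via Hirzebruch--Riemann--Roch these polynomials can be rewritten as linear combinations of the ``basic'' Euler characteristics appearing in the statement---$\chi(\wedge^3\mathcal{V})$, $\chi(\wedge^2\mathcal{V}\otimes\mathcal{V})$, $\chi(\wedge^2\mathcal{V})\chi(\mathcal{V})$, $\chi(\mathcal{T}^*S\otimes\wedge^3\mathcal{V})$, and $\binom{\chi(\mathcal{V})}{3}$. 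The universal \emph{shape} of $F(q)$ is therefore fixed a priori, and only the coefficients in this linear combination must be pinned down.

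To determine those coefficients I mimic the paper's approach: specialize $(S,\mathcal{V})$ to $(\mathbb{A}^2,\,L_1\oplus L_2\oplus L_3)$ for equivariant line bundles $L_i$, apply Atiyah--Bott localization to express $\chi((\mathbb{A}^2)^{[n]},\wedge^3\mathcal{V}^{[n]})$ as an explicit sum over partitions $\lambda\vdash n$ of characters in the torus parameters and the $L_i$, and evaluate the generating series in $q$ via the modified Mellit--Macdonald identity established in the main body of the paper. Comparing the resulting closed form against the universal ansatz fixes $a_1,a_2,a_3$. The main technical obstacle is the combinatorial evaluation of the $\wedge^3$ localization sum: the antisymmetrization across the three tautological copies of $\mathcal{V}^{[n]}$ interacts nontrivially with the Young-diagram combinatorics, and one needs the sum accurately to order $q^3$ to separate the three scalars. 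Once this is done, Riemann--Roch translates the equivariant answer into the numerator displayed in the proposition.
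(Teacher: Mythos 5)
Your plan is essentially the paper's own argument: reduce to toric $S$ and equivariant $\mathcal{V}$ via \cite{EGL}, localize on $\big(\C^2\big)^{[n]}$, and use the modified Mellit--Macdonald identity (equation~(\ref{rpart})) to truncate the partition sum to $|\lambda|\leq 3$ and evaluate the $m^3$-coefficient explicitly, then globalize. The only cosmetic difference is that you frame the final step as matching against a universal ansatz in the five basic Euler characteristics (a spanning claim you assert via Riemann--Roch but do not need a priori), whereas the paper simply computes the local contribution, packages it as a plethystic exponential of equivariant Euler characteristics as in~(\ref{descexgeo}), and reads off the global formula.
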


We now present examples of the form (\ref{lbspec}). Let $\mathcal{L}$ be a line bundle on $S$. The simplest example
\begin{gather*}
\sum_{k=0}^{\infty} (-m)^kZ_{S}(\mathcal{L}\mid k)
=\sum_{k,n\geq 0} q^n(-m)^k\chi\big(S^{[n]},\wedge^{k}\mathcal{L}^{[n]}\big)
=\frac{(1-qm)^{\chi(\mathcal{L})}}{(1-q)^{\chi(\mathcal{O}_S)}}
\end{gather*}
of Theorem~\ref{lbdescrat} is a consequence of~\cite[Theorem~5.2.1]{Sc1}.
The next simplest example
{\samepage\begin{align}\label{descex}
\sum_{k=0}^{\infty} &(-m)^kZ_{S}(\mathcal{L},\alpha\mid k,1)= \sum_{n,k\geq 0} q^n(-m)^k \chi\big(S^{[n]},(\wedge^{k}\mathcal{L}^{[n]}) \otimes \alpha^{[n]}\big)\nonumber
\\
&=\frac{(1-qm)^{\chi(\mathcal{L})}}{(1-q)^{\mathcal{\chi(\mathcal{O}_S)}}} \sum_{n=0}^{\infty}q^{n+1}m^n\chi\big(\mathcal{L}^{\otimes n}\otimes \alpha\big)-q^{n+1}m^{n+1}\chi\big(\mathcal{L}^{\otimes (n+1)}\otimes \alpha\big)
\end{align}}%
is computed in~\cite[Proposition 20]{AJLOP}. Hirzebruch--Riemann--Roch implies that the series (\ref{descex}) is of the form predicted by Theorem~\ref{lbdescrat}.

\subsection{Comparison with cohomological descendents and other geometries}
\subsubsection{Cohomological descendent series}
The rationality of (\ref{projseries}) contrasts with the expected behavior of descendent series in cohomology. Cohomological descendent integrals are often packaged as follows: for classes $\alpha_1,\dots,\alpha_l\in K(S)$ and integers $k_1,\dots,k_l\geq 0$, form the series \begin{align}\label{cohseries}
\sum_{n\geq 0} q^n\int_{S^{[n]}} \ch_{k_1}\big(\alpha_1^{[n]}\big)\cdots \ch_{k_l}\big(\alpha_l^{[n]}\big) c\big(\mathcal{T}S^{[n]}\big).
\end{align}

The series (\ref{cohseries}) have a different flavor than their K-theoretic counterparts (\ref{projseries}). For example, by G\"{o}ttsche's formula~\cite[Theorem~0.1]{G},
\begin{gather*}
\sum_{n\geq 0} q^{n} \int_{S^{[n]}} c_{2n}\big(\mathcal{T}S^{[n]}\big)=\prod_{m>0} \frac{1}{(1-q^m)},
\end{gather*}
and by~\cite[Corollary 3]{CO},
\begin{gather*}
\sum_{n\geq 0}q^{n} \int_{S^{[n]}} \ch_{1}\big(\mathcal{O}^{[n]}\big)c_{2n-1}\big(\mathcal{T}S^{[n]}\big)=\frac{c_1(S)^2}{2} \frac{E_2(q)-E_3(q)}{\prod_{m>0}(1-q^m) };
\end{gather*}
here,
\begin{gather*}
E_k(q)=\sum_{n> 0} n^{k-1} \frac{q^n}{1-q^{n}}.
\end{gather*}

Instead,~\cite[Conjecture 2]{O} conjectures that a suitably normalized version of the series (\ref{cohseries}) belongs to a distinguished algebra of $q$-series called $q$-multiple zeta values. One result in this direction is~\cite[Theorem~2]{C}, in which a $\C^*$-equivariant version of (\ref{cohseries}) for the affine plane $\C^2$ is proved to be a quasimodular form.

\subsubsection{Curves}

Analogs of series (\ref{projseries}) and (\ref{cohseries}) can be studied for integrals over Hilbert schemes of points on curves. Given a nonsingular projective curve $C$, classes $\beta_1,\dots,\beta_l\in K(C)$ and integers $k_1,\dots,k_l\geq 0$, both the K-theoretic descendent series \begin{align}\label{curvekseries}
\sum_{n\geq 0} q^n \chi\big({C^{[n]}}, \wedge^{k_1}\beta_1^{[n]}\otimes \dots \otimes\wedge^{k_l}\beta_l^{[n]}\big)
\end{align}
and the cohomological descendent series
\begin{align}\label{curvecohseries}
\sum_{n\geq 0} q^n \int_{C^{[n]}} \ch_{k_1}\big(\beta_1^{[n]}\big)\cdots \ch_{k_l}\big(\beta_l^{[n]}\big) c\big(\mathcal{T}C^{[n]}\big)
\end{align}
are Laurent expansions of rational functions. This rationality follows from the methods of~\cite[Section~2.3]{OP}. Namely, the induction scheme of~\cite{EGL} (whose consequences we recall for Hilbert schemes on surfaces in Section~\ref{secuniv}) reduces the problem to cases where $C$ is $\P^1$. The series (\ref{curvekseries}) and (\ref{curvecohseries}) can be explicitly computed for this geometry using the relation
\begin{gather*}
\O(d)^{[n]}=(d+1)\O-(d+1-n)\O(-1)\in K\big(\big(\P^1\big)^{[n]}\big)\cong K(\P^n)
\end{gather*}
from the proof of~\cite[Theorem~2]{MOP2}.

\subsubsection{Quot schemes}
Tautological integrals over Quot schemes
parametrizing quotients (of dimension at most 1) of vector bundles on surfaces have been studied in~\cite{AJLOP,B2,B1,JOP,St1,St2}. Such Quot schemes are typically singular but carry perfect obstruction theories; see, for example,~\cite[Section~4]{St2}. Descendent series of Quot schemes can therefore be defined through virtual structures.

Hilbert schemes of points on surfaces, in particular, can be regarded as Quot schemes parametrizing finite length quotients
\begin{gather*}
\O_{S}\twoheadrightarrow \mathcal{Z}.
\end{gather*}
The associated virtual structure sheaves and virtual fundamental classes differ from the ordinary structure sheaves and fundamental classes of Hilbert schemes, and in fact give rise to more easily understood invariants. The virtual structures have explicit descriptions:
\begin{gather*}
\big[S^{[n]}\big]^{\vir}=(-1)^{n} e\big(\mathcal{K}_S^{[n]}\big)\cap \big[S^{[n]}\big],\qquad
\O^{\vir}_{S^{[n]}}=\sum_{k=0}^{n} (-1)^{k} \wedge^{k} \mathcal{K}_S^{[n]},\\ \mathcal{T}^{\vir}S^{[n]}=TS^{[n]}-\big(\mathcal{K}_S^{[n]}\big)^{*}.
\end{gather*}

Given classes $\alpha_1,\dots,\alpha_l\in K(S)$ and integers $k_1,\dots,k_l\geq 0,$ one can form the {\em virtual cohomological descendent series}
\begin{align}\label{vircohseries}
\sum_{n\geq 0} q^n\int_{[S^{[n]}]^{\vir}} \ch_{k_1}\big(\alpha_1^{[n]}\big)\cdots \ch_{k_l}\big(\alpha_l^{[n]}\big) c\big(\mathcal{T}^{\vir}S^{[n]}\big)
\end{align}
and the {\em virtual K-theoretic descendent series}
\begin{align}\label{virkseries}
\sum_{n \geq 0} q^n \chi\big(S^{[n]}, \wedge^{k_1} \alpha^{[n]}_1\otimes \dots\otimes \wedge^{k_l} \alpha^{[n]}_l\otimes \mathcal{O}^{\vir}\big).
\end{align}

In contrast to (\ref{cohseries}), the series (\ref{vircohseries}) is proved in~\cite[Theorem~2]{JOP} to be the Laurent expansion of a rational function. The series (\ref{virkseries}) is proved in~\cite[Theorem~1]{AJLOP} to be the Laurent expansion of a rational function; it is a consequence of~\cite[Theorem~4]{AJLOP} that this rational function can be written with denominator $(1-q)^{2(k_1+\dots+k_l)}$.

We remark that the rationality of (\ref{virkseries}) also follows from Theorem~\ref{lbdescrat}; in (\ref{lbspec}), one specializes $\alpha_1=\mathcal{K}_S$ and $m=1$. Unlike in (\ref{projseries}), the order of the pole of (\ref{virkseries}) at $q=1$ need not depend on~$S$. For example, by~\cite[Example 7]{AJLOP} or by substituting $m=1$ and $\mathcal{L}=\mathcal{K}_S$ in (\ref{descex}),
\begin{gather*}
\sum_{n\geq 0} q^n \chi\big(S^{[n]},\alpha^{[n]}\otimes \mathcal{O}^{\vir}\big)=-c_1(\alpha) \cdot \mathcal{K}_S\cdot \frac{q}{1-q}-\mathrm{rk}(\alpha)\cdot \mathcal{K}_S^2\cdot \frac{q^2}{(1-q)^2}.
\end{gather*}

The following observation accounts for the similarity in the behavior of descendent series for Hilbert schemes on curves and virtual descendent series for Hilbert schemes on surfaces: if $S$ is a smooth projective surface admitting a smooth canonical curve $C\in |K_S|$, then the virtual structures $\O^{\vir}_{S^{[n]}}$ and $\big[S^{[n]}\big]^{\vir}$ localize onto $C^{[n]}$.
Namely, if $\iota\colon C^{[n]}\subset S^{[n]}$ is the inclusion induced by the inclusion $C\subset S$ of a canonical curve, then by~\cite[equation~(33)]{OP},
\begin{gather*}
\big[S^{[n]}\big]^{\vir}=(-1)^{n}\iota_{*} \big[C^{[n]}\big].
\end{gather*}
Similarly, if $\Theta\in K(C)$ is a theta characteristic, then by~\cite[Theorem~15]{AJLOP},
\begin{gather*}
\O^{\vir}_{S^{[n]}}=(-1)^{n} \iota_{*} \det\Theta^{[n]}.
\end{gather*}
Virtual descendent integrals on $S^{[n]}$ can therefore be written in terms of tautological integrals on $C^{[n]}$.

\subsection{Universal series}\label{secuniv}
For fixed $\alpha_1,\dots, \alpha_l\in K(S)$, important structure emerges when the $k_i$ are allowed to vary and all series $Z_S(\alpha_1,\dots ,\alpha_l\mid k_1,\dots,k_l)$ are considered together.

Set
\begin{align}\label{exseries}
\hat{Z}_S( \alpha_1,\dots,\alpha_l)= \sum_{\substack{n\geq 0 \\ k_1,\dots,k_l\geq 0}} q^{n}(-m_1)^{k_1}\cdots (-m_l)^{k_l} \chi\big(S^{[n]}, \wedge^{k_1} \alpha^{[n]}_1 \otimes \dots \otimes \wedge^{k_l} \alpha_l^{[n]} \big).
\end{align}

Now, fix an $r$-tuple ${\bf r}=(r_1,\dots,r_l)$ of integers. By~\cite[Theorem~4.2]{EGL}, there exist {\em universal series}
\begin{gather}\label{univseries}
\mathsf{A}^{\bf{r}}, \mathsf{B}^{\bf{r}}, \mathsf{C}^{\bf{r}}_{i}, \mathsf{D}^{\bf{r}}_{i}, \mathsf{E}^{\bf{r}}_{i,j} \in \Q[q,m_1,\dots,m_l]
\end{gather}
for which, given any surface $S$ and any collection $\alpha_1,\dots,\alpha_l$ of classes in $K(S)$ such that $\mathrm{rank}(\alpha_i)=r_i$ on each component of $S$, one has
\begin{align}\label{univeq}
\hat{Z}_S( \alpha_1,\dots,\alpha_l)=(\mathsf{A}^{\bf{r}})^{\chi(\mathcal{O}_S)} (\mathsf{B}^{\bf{r}})^{\mathcal{K}_S^2}\prod_{i=1}^{l} (\mathsf{C}^{\bf{r}}_i)^{\mathcal{K}_S\cdot c_1(\alpha_i)}(\mathsf{D}^{\bf{r}}_{i})^{c_2(\alpha_i)} \prod_{1\leq i\leq j\leq l} (\mathsf{E}^{\bf{r}}_{i,j} )^{c_1(\alpha_i)\cdot c_1(\alpha_j)}.
\end{align}

More generally, such a factorization into universal series exists for generating series formed from integrals of multiplicative characteristic classes of tautological bundles and the tangent bundle. See~\cite[Theorem~4.2]{EGL} for a precise statement.

One such series of interest is the {\em Verlinde series}
\begin{gather*}
V_S(\alpha)= \sum_{n}q^{n}\chi\big(S^{[n]},\det\big(\alpha^{[n]}\big)\big).
\end{gather*}
The series $V_S(\alpha)$ is formed from a subset of terms of (\ref{exseries}) when $\alpha$ has positive rank. For $\alpha$ of negative rank, Serre duality implies a close relationship between $V_S(\alpha)$ and $V_S(-\alpha)$; see~\cite[Theorem~5.3]{EGL}. The series $V_S(\alpha)$ for $\alpha$ of rank $-1$, $0$ or $1$ are explicitly computed in~\cite[Theorem~5.3]{EGL}. A relationship between the universal series appearing in a factorization of $V_S(\alpha)$ and those appearing in a factorization of the {\em Segre series}
\begin{gather*}
\sum_n q^n\int_{S^{[n]}} s_{2n}\big(\alpha'^{[n]}\big)
\end{gather*}
for $\alpha'$ of rank $-\mathrm{rk}(\alpha)-1$ was proposed by Johnson in~\cite{J} and further explicated in~\cite[Conjecture 1]{MOP2}. This relationship is used in~\cite{MOP2, MOP3} to obtain conjectural formulas for $V_S(\alpha)$ for $\alpha$ of rank $-3$, $-2$, $2$ or $3$. It is expected that $V_S(\alpha)$ is an algebraic function of $q$ for any $\alpha$.

For general $\alpha_i$, not much is known or conjectured about $\hat{Z}_S(\alpha_1,\dots,\alpha_l)$ and their constituent universal series (\ref{univseries}); one framework using vertex operators is presented in~\cite{Z}. Our approach yields a new combinatorial expression for the series $\hat{Z}_S(\alpha_1,\dots,\alpha_l)$. However, it seems challenging to extract concisely stated consequences for the entire series, or even the Verlinde series. The individual coefficients in the $m$-variables $Z_S(\alpha_1,\dots,\alpha_l\mid k_1,\dots, k_l)$ studied in Theorem~\ref{descrat} seem more tractable from this perspective.

\subsection{Outline} In Section~\ref{C2togen}, we introduce an equivariant affine analog $\hat{Z}_{\C^2}$ of the series $\hat{Z}_S$ and demonstrate that Theorems~\ref{descrat} and~\ref{lbdescrat} follow from their equivariant analogs, Propositions~\ref{equivdesc} and~\ref{lbequivdesc}. In~Section~\ref{Macdonald}, we obtain Propositions~\ref{equivdesc} and~\ref{lbequivdesc} via the combinatorial identity Proposition~\ref{Macsym}. This identity is a slight modification of a formula obtained in~\cite{M} from a result of~\cite{GHT}. We also use this identity to prove Proposition~\ref{newdesc}.

\section{Descendent series from equivariant descendents}\label{C2togen}

\subsection{Equivariant descendents}
The following notation will be useful. For a variable or constant $m$ and a K-theory class $\alpha$ set \begin{gather*}
\wedge_{m}^{\bullet}\alpha=\sum_{k=0}^{\infty} (-m)^k \wedge^k \alpha.
\end{gather*}

Consider $\C^2$ equipped with the action of a torus $T=\mathrm{diag}(t_1,t_2)$ scaling the coordinate axes with weights $t_1^{-1}$ and $t_2^{-1}$. The action of $T$ on $\C^2$ lifts to an action on the Hilbert schemes $\big(\C^2\big)^{[n]}.$ The definition (\ref{tautdef}) is also valid in the equivariant setting; in this way, a class $\gamma\in K_T\big(\C^2\big)$ induces a tautological class $\gamma^{[n]} \in K_T\big(\big(\C^2\big)^{[n]}\big)$.

Definition (\ref{exseries}) can be extended to the equivariant setting. Given $\gamma_1,\dots,\gamma_l\in K_T\big(\C^2\big)$, define
\begin{gather*}
\hat{Z}_{\C^2}(\gamma_1,\dots,\gamma_l)(t_1,t_2)
\\ \qquad
{}=\sum_{n\geq 0} q^{n} \chi\big(\big(\C^2\big)^{[n]}, \wedge^{\bullet}_{m_1} \gamma^{[n]}_1 \otimes \dots \otimes \wedge^{\bullet}_{m_l} \gamma_l^{[n]} \big)\in \Q(t_1,t_2)[[q,m_1,\dots,m_l]].
\end{gather*}
Here, each term on the right-hand side is an equivariant Euler characteristic and can be regarded as a rational function on $T$.

\subsection{Localization on the Hilbert scheme}
We recall the following special case of K-theoretic equivariant localization. Let $M$ be a smooth complex variety equipped with an action of a complex torus $\mathbb{T}$ such that the fixed locus $M^{\mathbb{T}}$ is a nonempty finite set of points and let $\mathscr{F}\in K_{\mathbb{T}}(M)$.

\begin{Proposition}[{\cite[Theorem~3.5]{T}}]
There is an equality of $\mathbb{T}$-equivariant Euler characteristics
\begin{gather}\label{Keq}
\chi(M,\mathscr{F})=\sum_{p\in M^{\mathbb{T}}} \chi\bigg(p,\frac{\mathscr{F}|_{p}}{\wedge^{\bullet}_1 \mathcal{T}^{*}M|_p} \bigg) \in \Q(\mathbb{T}).
\end{gather}
\end{Proposition}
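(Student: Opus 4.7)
The plan is to deduce this from the standard equivariant localization package in $K$-theory: the pushforward $\iota_*\colon K_{\mathbb{T}}(M^{\mathbb{T}}) \to K_{\mathbb{T}}(M)$ from the fixed locus becomes an isomorphism after tensoring with the fraction field $\Q(\mathbb{T})$, and the self-intersection formula lets one invert $\iota_*$ explicitly at an isolated fixed point. I would set up the argument in three steps, citing the localization theorem for the hardest ingredient and doing the bookkeeping to extract the stated formula.

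First, I would record the self-intersection / excess intersection formula for the regular closed embedding $\iota_p\colon \{p\}\hookrightarrow M$ of an isolated fixed point. Its conormal bundle is $\mathcal{T}^*M|_p$, and the standard computation (resolving $\O_p$ on $M$ by a Koszul complex built from $\mathcal{T}^*M|_p$) gives
\begin{gather*}
\iota_p^{*}\iota_{p,*}\mathscr{G} \;=\; \wedge^{\bullet}_1 \mathcal{T}^{*}M|_p \otimes \mathscr{G}
\end{gather*}
in $K_{\mathbb{T}}(p)$ for any $\mathscr{G}\in K_{\mathbb{T}}(p)$. Because $\mathbb{T}$ acts on $M$ with isolated fixed points, every $\mathbb{T}$-weight appearing in $\mathcal{T}^{*}M|_p$ is nontrivial, so $\wedge^{\bullet}_1 \mathcal{T}^{*}M|_p = \prod_i(1 - w_i)$ is a unit in $\Q(\mathbb{T})$. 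This makes $\iota_p^{*}\iota_{p,*}$ an invertible operation on $K_{\mathbb{T}}(p)\otimes \Q(\mathbb{T})$.

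Second, I would invoke the Thomason localization theorem \cite[Theorem~3.5]{T}: the total pushforward $\iota_*\colon \bigoplus_{p\in M^{\mathbb{T}}} K_{\mathbb{T}}(p)\otimes \Q(\mathbb{T}) \to K_{\mathbb{T}}(M)\otimes \Q(\mathbb{T})$ is an isomorphism. Combining this with the first step, the inverse is computed componentwise: any $\mathscr{F}\in K_{\mathbb{T}}(M)\otimes \Q(\mathbb{T})$ satisfies
\begin{gather*}
\mathscr{F} \;=\; \sum_{p\in M^{\mathbb{T}}} \iota_{p,*}\!\left(\frac{\iota_p^{*}\mathscr{F}}{\wedge^{\bullet}_1 \mathcal{T}^{*}M|_p}\right),
\end{gather*}
since applying $\iota_q^{*}$ to both sides and using $\iota_q^{*}\iota_{p,*}=0$ for $p\neq q$ (disjoint supports of fixed points) recovers the identity on each summand.

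Third, I would push both sides to a point: the equivariant Euler characteristic $\chi(M,-)$ factors through $\iota_{p,*}$ as $\chi(p,-)$, so
\begin{gather*}
\chi(M,\mathscr{F}) \;=\; \sum_{p\in M^{\mathbb{T}}} \chi\!\left(p,\frac{\mathscr{F}|_p}{\wedge^{\bullet}_1 \mathcal{T}^{*}M|_p}\right),
\end{gather*}
which is the desired identity in $\Q(\mathbb{T})$. The one subtlety worth flagging is that $\chi(M,\mathscr{F})$ a priori lies in the representation ring before localization; rationality of each side and equality over $\Q(\mathbb{T})$ is enough because both are elements of $\Q(\mathbb{T})$. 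The main obstacle, which I would simply cite rather than reprove, is the Thomason localization theorem itself, whose proof requires showing that the K-theory of the open complement of the fixed locus is annihilated by a suitable element of the representation ring, using a dévissage along orbit types.
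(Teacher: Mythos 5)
The paper does not actually prove this proposition: it is quoted directly from Thomason, and the bracketed citation \cite[Theorem~3.5]{T} is the entirety of the paper's justification, so there is no in-paper argument to compare yours against. That said, your derivation is the standard and correct way to extract the explicit fixed-point formula from the concentration theorem, and the three steps check out: the Koszul-resolution computation of $\iota_p^{*}\iota_{p,*}$, the invertibility of $\wedge^{\bullet}_1\mathcal{T}^{*}M|_p$ in $\Q(\mathbb{T})$ (which correctly uses that the fixed points are isolated, so no tangent weight is trivial), and the compatibility of $\chi$ with $\iota_{p,*}$. One point you should make explicit: your verification of the decomposition $\mathscr{F}=\sum_p \iota_{p,*}\bigl(\iota_p^{*}\mathscr{F}/\wedge^{\bullet}_1\mathcal{T}^{*}M|_p\bigr)$ proceeds by applying $\iota_q^{*}$ to both sides, which establishes equality only if $\bigoplus_q\iota_q^{*}$ is injective on $K_{\mathbb{T}}(M)\otimes\Q(\mathbb{T})$. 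This does follow from what you have already set up --- $\iota_*$ is an isomorphism by Thomason and $\iota^{*}\iota_*$ is invertible because it is diagonal with unit entries, hence $\iota^{*}$ is itself an isomorphism --- but as written that injectivity is used silently; alternatively, just apply $\iota_*^{-1}$ and the componentwise inverse directly. With that one sentence added, the argument is complete and is the derivation Thomason's citation is standing in for.
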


When applied to the Hilbert scheme of points, (\ref{Keq}) yields a combinatorial description of $\hat{Z}_{\C^2}(\gamma_1,\dots,\gamma_l).$ We record this description.

A {\em Young diagram} $\lambda$ is a finite subset of $\Z^2_{\geq 0}$ satisfying the following property: if $(c_1,c_2)\in \lambda$, then for any $(c_1',c_2')\in\Z^2_{\geq 0}$ such that $c'_1\leq c_1$ and $c'_2\leq c_2$, one also has $(c'_1,c'_2)\in \lambda$.

We can associate to a Young diagram $\lambda$ the point $p_{\lambda}\in \big(\C^2\big)^{[|\lambda|]}$ cut out by the monomial ideal
\begin{gather*}
\mathop{\rm Span}\bigl\{ x_1^{b_1}x_2^{b_2}\mid (b_1,b_2)\not\in \lambda\bigr\} \subset \C[x_1,x_2]=H^0(\mathcal{O}_{\C^2}).
\end{gather*}
The $T$-fixed locus of $\big(\C^2\big)^{[n]}$ consists of the points $p_{\lambda}$ with $\lambda$ of size $n$.

For $\gamma\in K_T\big(\C^2\big)$, let $\chi(\gamma|_{0})\in \Z\big[t_1^{\pm},t_2^{\pm}\big]$ denote the $T$-character of the fiber of $\gamma$ over the origin. For $\lambda$ of size $n$, the fiber $\gamma^{[n]}|_{p_{\lambda}}$ has $T$-character
\begin{gather*}
\chi(\gamma|_{0}) \sum_{(c_1,c_2)\in \lambda}t_1^{c_1}t_2^{c_2}.
\end{gather*}
In particular, if
\begin{gather*}
\chi(\gamma|_{0})=\sum_{i} v_i -\sum_{j} w_j,
\end{gather*}
where each $v_i$ and $w_j$ is a $T$-weight (a Laurent monomial in $t_1$ and $t_2$,) then the fiber $ \wedge^{\bullet}_m \gamma^{[n]}|_{p_{\lambda}}$ has $T$-character
\begin{gather*}
\Exp\biggl[{-}m\chi(\gamma|_{0}) \sum_{(c_1,c_2)\in\lambda}t_1^{c_1}t_2^{c_2}\biggr]=\prod_{(c_1,c_2)\in \lambda} \frac{\prod_{i}\big(1-mv_it_1^{c_1}t_2^{c_2}\big)}{\prod_{j}\big(1-mw_jt_1^{c_1}t_2^{c_2}\big)};
\end{gather*}
the definition of the plethystic exponential $\Exp$ is recalled in (\ref{expdef}).

Let $\lambda$ be a Young diagram. Given $(c_1,c_2)\in \lambda$, define the {\em leg length} $l((c_1,c_2))$ to be the largest integer $k$ for which $(c_1+k,c_2)\in \lambda$ and the {\em arm length} $a((c_1,c_2))$ to be the largest integer $k$ for which $(c_1,c_2+k)\in \lambda$.

For $\lambda$ of size $n$, the $T$-character of the fiber of the cotangent bundle $\mathcal{T}^{*}\big(\C^2\big)^{[n]}$ at $p_{\lambda}$ is computed in~\cite[Lemma~3.2]{ES} to be
\begin{gather*}
\sum_{\square\in\lambda} t_1^{l(\square)+1}t_2^{-a(\square)}+t_1^{-l(\square)}t_2^{a(\square)+1}.
\end{gather*}
Set $C_{\lambda}$ to be the $T$-character of ${\wedge^{\bullet}_1 \mathcal{T}^{*}\big(\C^2\big)^{[n]}|_{p_{\lambda}}};$ explicitly,
\begin{align}\label{Clambdadef} C_{\lambda}=\Exp\big[{-}\chi\big(p_{\lambda},\mathcal{T}^{*}\big(\C^2\big)^{[n]}|_{p_{\lambda}}\big)\big]= \prod_{\square\in\lambda}{\big(1- t_1^{l(\square)+1}t_2^{-a(\square)}\big) \big(1-t_1^{-l(\square)}t_2^{a(\square)+1}\big)}.
\end{align}

By (\ref{Keq}), we conclude that
\begin{align}\label{combZ}
\hat{Z}_{\C^2}(\gamma_1,\dots,\gamma_l)(t_1,t_2)=\sum_{\lambda}\frac{q^{|\lambda|}}{C_{\lambda}} \Exp\biggl[{-} \biggl(\sum_{j=1}^{l}m_j\cdot \chi(\gamma_j|_0)\biggr)\sum_{(c_1,c_2)\in \lambda} t_1^{c_1}t_2^{c_2} \biggr].
\end{align}

In mathematical physics, series of the form (\ref{combZ}) arise as rank $1$ Nekrasov partition functions of 5-dimensional supersymmetric gauge theories with fundamental matter.

\subsection[From C\textasciicircum{}2 to a general surface]{From $\mathbb{C}^2$ to a general surface}

\subsubsection{Arbitrary descendents}\label{arbitrary}

Fix $\gamma_1,\dots,\gamma_l\in K_T\big(\C^2\big)$. As ${\bf a}=(a_1,\dots,a_l)$ ranges over $\Z_{\geq 0}^{l}$, let
\begin{gather*}
g_{{\bf a}}(q)\in \Q(t_1,t_2)[[q]]
\end{gather*}
be the collection of series for which
\begin{align}\label{gdef} \hat{Z}_{\C^2}(\gamma_1,\dots,\gamma_l)(t_1,t_2)=\Exp\bigg[\frac{q}{(1-t_1)(1-t_2)}\bigg] \sum_{\bf{a}}m_1^{a_1}\cdots m_l^{a_l} g_{\bf{a}}(q).
\end{align}
We remark that
\begin{gather*}
\Exp\bigg[\frac{q}{(1-t_1)(1-t_2)}\bigg]=\hat{Z}_{\C^2}(\varnothing)(t_1,t_2),
\end{gather*}
which can be seen by Proposition~\ref{Macsym} or otherwise.

We formulate the following equivariant analog of Theorem~\ref{descrat}.

\begin{Proposition}\label{equivdesc}
The series $g_{{\bf a}}(q)$ are polynomials in $q$. Moreover,
\begin{gather*}
\deg_{q} g_{\bf{a}}\leq a_1+\dots+a_l.
\end{gather*}
\end{Proposition}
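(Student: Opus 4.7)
The plan is to exploit Proposition~\ref{Macsym} to rewrite $\hat{Z}_{\C^2}(\gamma_1,\dots,\gamma_l)$ as a single plethystic exponential, thereby transferring the statement about $g_{\bf a}(q)$ into a purely formal degree-tracking argument. Specifically, one expects Proposition~\ref{Macsym} to evaluate the combinatorial sum (\ref{combZ}) in closed form as
\[
\hat{Z}_{\C^2}(\gamma_1,\dots,\gamma_l)(t_1,t_2) \;=\; \Exp\!\left[\frac{q\cdot F(m_1,\dots,m_l,t_1,t_2)}{(1-t_1)(1-t_2)}\right],
\]
where $F$ is built multiplicatively from the fibers $\wedge^\bullet_{m_j}\chi(\gamma_j|_0)$ and in particular satisfies $F|_{m_1=\cdots=m_l=0}=1$. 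The specialization $l=0$ then recovers the identity $\hat{Z}_{\C^2}(\varnothing)=\Exp\!\left[q/((1-t_1)(1-t_2))\right]$ mentioned just after (\ref{gdef}).

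Granted such an identity, I would next divide both sides by $\hat{Z}_{\C^2}(\varnothing)$. Using the multiplicativity $\Exp[A+B]=\Exp[A]\,\Exp[B]$, the division becomes a subtraction inside the plethystic exponential, yielding
\[
\sum_{\bf a} m_1^{a_1}\cdots m_l^{a_l}\, g_{\bf a}(q) \;=\; \Exp\!\left[\frac{q\,(F-1)}{(1-t_1)(1-t_2)}\right].
\]
Because $F-1$ has no constant term in $(m_1,\dots,m_l)$, every monomial of the argument on the right has $m$-degree at least $1$ and $q$-degree exactly $1$.

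The proof would conclude by extracting coefficients. Writing $\Exp[X] = \exp\bigl(\sum_{n\geq 1}p_n[X]/n\bigr)$ in terms of the Adams operations $p_n$ (which multiply all $m$- and $q$-degrees simultaneously by $n$), every monomial appearing in $p_n\!\left[q(F-1)/((1-t_1)(1-t_2))\right]$ still has $q$-degree at most $m$-degree. This inequality is preserved under multiplication, so every term in the ordinary exponential satisfies the same bound, and hence any monomial $m_1^{a_1}\cdots m_l^{a_l}q^d$ appearing on the right has $d\leq a_1+\cdots+a_l$. Moreover, since each contributing Adams factor carries $m$-degree $\geq 1$, only finitely many combinations of indices produce a given multi-degree $\bf a$; consequently $g_{\bf a}(q)$ is a \emph{polynomial} of the asserted bounded degree, not merely a formal power series.

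The main obstacle is thus the derivation of Proposition~\ref{Macsym} itself: one must verify that Mellit's identity~\cite{M}, which was originally formulated for Macdonald polynomial inner products, can be adapted to accommodate the plethystic product $\prod_j\wedge^\bullet_{m_j}\gamma_j^{[n]}$ occurring in (\ref{combZ}), and that the resulting $F$ genuinely satisfies $F|_{m=0}=1$ so that the degree-tracking above applies. Once Proposition~\ref{Macsym} is in hand, the combinatorics controlling $\deg_q g_{\bf a}$ is essentially automatic.
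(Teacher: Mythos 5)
There is a genuine gap. Your argument rests on the claim that Proposition~\ref{Macsym} evaluates (\ref{combZ}) in closed form as $\Exp\big[q\,F(m_1,\dots,m_l,t_1,t_2)/((1-t_1)(1-t_2))\big]$ with $F$ independent of $q$, and your degree count uses precisely the feature that every monomial in the argument of $\Exp$ has $q$-degree exactly $1$. Both claims fail. Proposition~\ref{Macsym} is a symmetry statement (invariance of (\ref{Macid}) under exchanging $X$ and $Y$), not a summation formula, and $\hat{Z}_{\C^2}$ is not of the asserted shape: already for a rank~$3$ bundle the expansion (\ref{descexpol}) computed later in the paper shows that the plethystic logarithm contains terms such as $m^2\big(q-q^2\big)v^{(2)}/((1-t_1)(1-t_2))$, whose $q$-degree exceeds $1$. (The closed form you posit does hold when every $\gamma_j$ is a line bundle, but not in general.) The weaker statement that would make your degree tracking go through --- that the $m_1^{a_1}\cdots m_l^{a_l}$-coefficient of the plethystic logarithm is a $q$-polynomial of degree at most $a_1+\cdots+a_l$ --- is essentially a restatement of the proposition itself, and you offer no independent argument for it; likewise the finiteness claim needed for polynomiality rests on the same unproved premise.

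For comparison, the paper's mechanism is different: one specializes $X=q$ and $Y=\sum_j m_ju_j$ in (\ref{Macid}); since $H_{\lambda}[q]=q^{|\lambda|}$, this specialization reproduces $\Exp\big[\sum_j m_ju_j/((1-t_1)(1-t_2))\big]\cdot\hat{Z}_{\C^2}$ via (\ref{combZ}). Applying the $X\leftrightarrow Y$ symmetry then yields (\ref{rpart}): up to the prefactor $\Exp\big[{-}\sum_j m_ju_j/((1-t_1)(1-t_2))\big]$, the generating function $\sum_{\bf a}m_1^{a_1}\cdots m_l^{a_l}g_{\bf a}$ equals $\sum_{\lambda}C_{\lambda}^{-1}H_{\lambda}\big[\sum_j m_ju_j\big]\prod_{(c_1,c_2)\in\lambda}\big(1-qt_1^{c_1}t_2^{c_2}\big)$. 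After the swap, all $q$-dependence sits in the finite product over the boxes of $\lambda$, a polynomial of degree $|\lambda|$ in $q$, while $H_{\lambda}$ is homogeneous of degree $|\lambda|$ in the $m_j$; hence only partitions with $|\lambda|\le a_1+\cdots+a_l$ contribute to $g_{\bf a}$, and each contributes a $q$-polynomial of degree at most $|\lambda|$. The role of the symmetry is to trade the infinite sum weighted by $q^{|\lambda|}$ for one that truncates in each fixed $m$-degree; that is the step your proposal is missing.
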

Proposition~\ref{equivdesc} is a consequence of (\ref{rpart}), whose proof is the subject of Section~\ref{ardescpf}.

Returning to our original problem, let $S$ be a projective surface and let $\alpha_1,\dots,\alpha_l$ be classes in $K(S)$. Without loss of generality, assume that $\alpha_j$ is of rank $r_j$ on all components of $S$.

Define $f_{{\bf a}}(q)\in \Q[[q]]$ to be the collection of series for which
 \begin{gather} \label{projser}
\hat{Z}_S(\alpha_1,\dots,\alpha_l)=\frac{1}{(1-q)^{\chi(\O_S)}}\sum_{{\bf a}} m_1^{a_1}\cdots m_l^{a_l} f_{{\bf a}}(q).\end{gather}

Proposition~\ref{equivdesc} implies the following rephrasing of Theorem~\ref{descrat}.

\begin{Corollary}\label{projdesc}
The series $f_{{\bf a}}(q)$ appearing in $(\ref{projser})$ are polynomials in $q$. Moreover,
\begin{gather*}
\deg_q f_{{\bf a}} \leq a_1+\dots+a_l.
\end{gather*}
\end{Corollary}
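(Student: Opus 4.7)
The plan is to deduce Corollary~\ref{projdesc} from Proposition~\ref{equivdesc} via the EGL universality \eqref{univeq}. The two generating series $\hat{Z}_S$ and $\hat{Z}_{\C^2}$ share a common collection of universal power series in $\Q[[q, m_1, \dots, m_l]]$, and the prefactor $1/(1-q)^{\chi(\O_S)}$ in \eqref{projser} is the projective analog of $\Exp[q/((1-t_1)(1-t_2))] = \hat{Z}_{\C^2}(\varnothing)$ in \eqref{gdef}. The degree bound in $q$ for $g_{\bf a}$ should therefore pass through this shared structure to give the bound for $f_{\bf a}$.

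First I would normalize \eqref{univeq} by the empty specialization. Setting all $\alpha_i = 0$ and using $\hat{Z}_S(\varnothing) = 1/(1-q)^{\chi(\O_S)}$ forces $\mathsf A^{\bf r}|_{m=0} = 1/(1-q)$, while $\mathsf B^{\bf r}|_{m=0} = \mathsf C_i^{\bf r}|_{m=0} = \mathsf D_i^{\bf r}|_{m=0} = \mathsf E_{i,j}^{\bf r}|_{m=0} = 1$. Hence $(1-q)^{\chi(\O_S)}\hat{Z}_S$ is a product of base series each equal to $1$ at $m = 0$, and taking logarithms yields
\begin{gather*}
\log\bigl[(1-q)^{\chi(\O_S)} \hat{Z}_S\bigr] = \chi(\O_S)\, \mathsf a + \mathcal K_S^2\, \mathsf b + \sum_i \mathcal K_S \cdot c_1(\alpha_i)\, \mathsf c_i + \sum_i c_2(\alpha_i)\, \mathsf d_i + \sum_{i \le j} c_1(\alpha_i) c_1(\alpha_j)\, \mathsf e_{i,j},
\end{gather*}
with $\mathsf a := \log[(1-q) \mathsf A^{\bf r}]$, $\mathsf b := \log \mathsf B^{\bf r}$, etc., all vanishing at $m = 0$. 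Exponentiating and extracting the coefficient of $m_1^{a_1}\cdots m_l^{a_l}$, one sees that $f_{\bf a}(q)$ is a universal polynomial expression in the $m^{{\bf a}'}$-coefficients of $\mathsf a, \mathsf b, \mathsf c_i, \mathsf d_i, \mathsf e_{i,j}$ for $|{\bf a}'| \le |{\bf a}|$, weighted by characteristic numbers of $(S, \alpha_i)$. Since products and sums of polynomials of $q$-degrees $N_j$ have $q$-degree at most $\sum N_j$, the desired bound reduces to showing that each such universal $m$-coefficient is a polynomial in $q$ of degree at most the corresponding $|{\bf a}'|$.

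The same logarithmic analysis applies equivariantly to $\hat{Z}_{\C^2}$: the characteristic numbers of $(S, \alpha_i)$ are replaced by their $T$-equivariant analogs localized at the origin of $\C^2$ (for instance $\chi^T(\O_{\C^2}) = 1/((1-t_1)(1-t_2))$), and \eqref{gdef} identifies $\sum_{\bf a} m_1^{a_1}\cdots m_l^{a_l} g_{\bf a}(q)$ with $\hat{Z}_{\C^2}/\hat{Z}_{\C^2}(\varnothing)$. Running this equivariantly and invoking $T$-equivariant localization on a sufficiently rich family of toric test surfaces (as in the EGL construction of universal series) expresses each universal $m$-coefficient as a $\Q(t_1, t_2)$-linear combination of the $g_{{\bf a}'}(q)$, with coefficients independent of $q$. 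By Proposition~\ref{equivdesc}, each such $g_{{\bf a}'}$ is polynomial in $q$ of degree $\le |{\bf a}'|$, so the bound transfers to $f_{\bf a}(q)$. The main obstacle is this final step: carefully implementing the universal-coefficient extraction from the equivariant $\C^2$ data and verifying that the multipliers in $\Q(t_1, t_2)$ are genuinely $q$-independent, so that the degree bound in $q$ survives the transfer.
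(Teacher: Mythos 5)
Your proposal is correct and follows essentially the same route as the paper: reduce to toric $S$ and torus-equivariant $\alpha_i$ via EGL universality, factor $\hat{Z}_S$ over the torus-fixed points into copies of $\hat{Z}_{\C^2}$ as in (\ref{toricfact}), and transfer the degree bound of Proposition~\ref{equivdesc} through the specialization $t_1=1$, $t_2=1$. The paper bypasses your logarithmic extraction of the individual universal series: it simply multiplies the factorizations (\ref{gdef}) over the fixed points, so that $f_{\bf a}$ is directly the specialization of a finite sum of products of $g_{{\bf a}^{(i)}}$'s with $\sum_i {\bf a}^{(i)}={\bf a}$, each product already a polynomial in $q$ of degree at most $a_1+\dots+a_l$.
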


\begin{proof}
The first step is to use the argument of~\cite[Sections~4 and~5]{EGL} to reduce to the case when $S$ is toric and $\alpha_1,\dots,\alpha_l\in K(S)$ are torus-equivariant. By the factorization $(\ref{univeq})$, each coefficient of each $f_{\bf a}$ is a universal polynomial in the Chern numbers \begin{align}\label{cherndata}
\chi(\mathcal{O}_{S}),\ \mathcal{K}_{S}^2,\ \mathcal{K}_{S}\cdot c_1(\alpha_j),\ c_2(\alpha_j),\ c_1(\alpha_{j'})\cdot c_1(\alpha_{j''}),\qquad
1\leq j \leq l,\quad 1\leq j',j''\leq l.
\end{align}
Any polynomial that vanishes at all values of the form (\ref{cherndata}) when (for example) $S$ is toric and $\alpha_1,\dots,\alpha_l$ are torus-equivariant must be identically zero.

So, let $S$ be toric, let $T=\diag(t_1,t_2)$ act on $S$ with finitely many fixed points $s_i$ and let $\alpha_1,\dots,\alpha_l\in K_T(S)$. For each $s_i$, let $w_{i_1}$ and $w_{i_2}$ denote the cotangent weights at $s_i,$ let $U_i$ denote the toric chart centered at $s_i$, and set
\begin{gather*}
(\alpha_j)_i=\alpha_j|_{U_i}\in K_T(U_i)\cong K_T\big(\C^2\big).
\end{gather*}

The action of $T$ on $S$ lifts to an action on $S^{[n]}$. By (\ref{Keq}), there is the following equality of $T$-equivariant Euler characteristics
\begin{align*}
\sum_{n\geq 0} q^{n} \chi\big(S^{[n]}, \wedge^{\bullet}_{m_1} \alpha_1^{[n]} \otimes \dots \otimes \wedge^{\bullet}_{m_l} \alpha_l^{[n]} \big)=\prod_{i} \hat{Z}_{\C^2}((\alpha_1)_i,\dots,(\alpha_l)_i)(w_{i_1},w_{i_2}).
\end{align*}
So, the (nonequivariant) series $\hat{Z}_{S}$ can be recovered from $\hat{Z}_{\C^2}$ as follows: \begin{align}\label{toricfact}
\hat{Z}_{S}(\alpha_1,\dots,\alpha_l)=\bigg(\prod_{i} \hat{Z}_{\C^2}((\alpha_1)_i,\dots,(\alpha_l)_i)(w_{i_1},w_{i_2})) \bigg)\bigg|_{t_1=1,t_2=1}.
\end{align}

Moreover, by $T$-equivariant localization on $S$, one has
\begin{gather*}
\chi(\O_{S})=\bigg(\sum_{i} \frac{1}{(1-w_{i_1})(1-w_{i_2})}\bigg)\bigg |_{t_1=1,t_2=1},
\end{gather*}
so that
\begin{align}\nonumber
\bigg(\prod_{i} \Exp\bigg [\frac{q}{(1-w_{i_1})(1-w_{i_2})}\bigg ] \bigg)\bigg |_{t_1=1,t_2=1}&=\Exp\bigg[\bigg(\sum_{i} \frac{q}{(1-w_{i_1})(1-w_{i_2})}\bigg)\bigg |_{t_1=1,t_2=1}\bigg]\nonumber
\\
&=\Exp[q\chi(\O_S)]=\frac{1}{(1-q)^{\chi(\O_S)}}.
\label{prefeq}
\end{align}

By (\ref{prefeq}), the specialization at $t_1=1$, $t_2=1$ of the product of prefactors of each term on the right-hand side of (\ref{toricfact}) matches the denominator of the right-hand side of (\ref{projser}). Applying Proposition~\ref{equivdesc} to each factor in (\ref{toricfact}), we obtain the corollary. \end{proof}

\subsubsection[Descendents with alpha\_1 a line bundle]{Descendents with $\boldsymbol{\alpha_1}$ a line bundle}

We now turn to Theorem~\ref{lbdescrat}, which we will also deduce from an equivariant analog, Proposition~\ref{lbequivdesc}. The argument is a slightly more intricate version of that of Section~\ref{arbitrary}.

Fix $\gamma_1,\dots,\gamma_l\in K_T\big(\C^2\big)$ such that $\gamma_1$ is the class of a equivariant line bundle. Set
\begin{gather*}
u=\chi(\gamma_1|_0).
\end{gather*}
Note that $u$ is a monomial (with coefficient 1) in $t_1^{\pm}$ and $t_2^{\pm}$. As $\tilde{\mathbf{a}}=(a_2,\dots,a_l)$ ranges over $\Z_{\geq 0}^{l-1}$, let
\begin{gather*}
\tilde{g}_{\tilde{\bf{a}}}(q,m_1)\in \Q(t_1,t_2)[[q,m_1]]
\end{gather*}
be the series such that
\begin{align}\label{gtildedef}
\hat{Z}_{\C^2}(\gamma_1,\dots,\gamma_l)=\Exp\bigg[\frac{q-qm_1u}{(1-t_1)(1-t_2)}\bigg]
\sum_{\tilde{\bf{a}}} m_2^{a_2}\cdots m_l^{a_l}\tilde{g}_{\tilde{\bf{a}}}.
\end{align}

\begin{Proposition}\label{lbequivdesc}
The series $\tilde{g}_{\tilde{\bf a}}(q,m_1)$ are Laurent expansions of rational functions in $q$ and~$m_1$ of the form
\begin{gather*}
\frac{s\big(t_1^{\pm}, t_2^{\pm},q,m_1\big)}{\prod_{w}(1-w)\prod_{w'}(1-m_1w')},
\end{gather*}
where $s$ is a polynomial and $w$ and $w'$ range over finitely many $T$-weights.
\end{Proposition}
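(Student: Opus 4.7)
The plan is to mimic the argument leading to Proposition~\ref{equivdesc}, but to absorb the $m_1$-dependence of the line bundle $\gamma_1$ into the prefactor instead of treating it as a perturbation. Since $\gamma_1$ is an equivariant line bundle, $\chi(\gamma_1|_0)=u$ is a single $T$-weight, and the contribution of $\wedge^{\bullet}_{m_1}\gamma_1^{[n]}$ to the fiber at $p_\lambda$ is simply $\prod_{(c_1,c_2)\in\lambda}(1-m_1 u t_1^{c_1} t_2^{c_2})$. Starting from the localization expression (\ref{combZ}), I would thus split off this line-bundle product,
\begin{equation*}
\hat{Z}_{\C^2}(\gamma_1,\dots,\gamma_l) = \sum_{\lambda} \frac{q^{|\lambda|}}{C_\lambda} \prod_{(c_1,c_2)\in\lambda}\bigl(1 - m_1 u t_1^{c_1} t_2^{c_2}\bigr)\, \Exp\!\biggl[-\!\sum_{j\geq 2} m_j \chi(\gamma_j|_0) \sum_{(c_1,c_2)\in\lambda} t_1^{c_1} t_2^{c_2}\biggr],
\end{equation*}
and then apply Proposition~\ref{Macsym} to the result.

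The key input is the $l=1$ specialization $\hat{Z}_{\C^2}(\gamma_1) = \Exp[(q-qm_1 u)/((1-t_1)(1-t_2))]$, which should follow directly from the modified Mellit identity of Proposition~\ref{Macsym} and which pins down the prefactor appearing in (\ref{gtildedef}). For general $l$, I would Taylor-expand $\Exp[-\sum_{j\geq 2} m_j(\cdots)]$ in the variables $m_2,\dots,m_l$ and apply Proposition~\ref{Macsym} to each monomial coefficient: each such application rewrites the corresponding $\lambda$-sum as a rational expression in $t_1, t_2, q, m_1$ sharing the common factor $\Exp[(q-qm_1 u)/((1-t_1)(1-t_2))]$. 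Dividing out this factor then identifies $\tilde{g}_{\tilde{\mathbf{a}}}(q,m_1)$ as a rational function in the stated variables.

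Finally, one must read off the denominator structure. The factors $(1-w)$ should arise from the hook denominators in $C_\lambda^{-1}$ (cf.~(\ref{Clambdadef})), which contribute $(1-t_1^a t_2^b)^{-1}$ for finitely many $(a,b)$ in each fixed multidegree $\tilde{\mathbf{a}}$. The factors $(1-m_1 w')$ should arise from the cross terms between the products $\prod_{\square}(1-m_1 u t_1^{c_1} t_2^{c_2})$ and $C_\lambda^{-1}$, each of which contributes a simple pole at $m_1 u t_1^a t_2^b = 1$. The main obstacle lies precisely here: a priori the sum over $\lambda$ could produce unboundedly many distinct denominator weights, and the substance of the argument is verifying that the modified Mellit identity of Proposition~\ref{Macsym} collapses them into a finite set for each $\tilde{\mathbf{a}}$. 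Once this finiteness is established, clearing denominators yields the required polynomial numerator $s(t_1^\pm, t_2^\pm, q, m_1)$.
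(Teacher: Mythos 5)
Your overall strategy---split off the line-bundle factor in (\ref{combZ}), pin down the prefactor via the $l=1$ case, and invoke Proposition~\ref{Macsym}---is the right one, and your identity $\hat{Z}_{\C^2}(\gamma_1)=\Exp[(q-qm_1u)/((1-t_1)(1-t_2))]$ is correct. But the proof is incomplete at exactly the point you flag: you never establish that only finitely many denominator weights occur for a fixed $\tilde{\mathbf{a}}$, and you do not say how Proposition~\ref{Macsym} is actually to be applied. The missing idea is the choice of specialization: apply the symmetry of (\ref{Macid}) \emph{once} to the whole generating series, with $X=q-qm_1u_1$ and $Y=\sum_{j\geq 2}m_ju_j$ (coefficient extraction in $m_2,\dots,m_l$ commutes with this and should be done afterwards, not term by term as you suggest). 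Because $u_1$ is a single monomial, homogeneity of $H_{\lambda}$ together with (\ref{specialvalues2}) gives $H_{\lambda}[q-qm_1u_1]=q^{|\lambda|}\prod_{(c_1,c_2)\in\lambda}\bigl(1-m_1u_1t_1^{c_1}t_2^{c_2}\bigr)$, so the $(X,Y)$ side of (\ref{Macid}) is precisely $\Exp\bigl[Y/((1-t_1)(1-t_2))\bigr]\hat{Z}_{\C^2}(\gamma_1,\dots,\gamma_l)$. Swapping $X$ and $Y$ then yields
\begin{gather*}
\sum_{\tilde{\bf{a}}} m_2^{a_2}\cdots m_l^{a_l}\, \tilde{g}_{\tilde{\bf{a}}}
=\Exp\biggl[-\frac{\sum_{j\geq 2}m_ju_j}{(1-t_1)(1-t_2)}\biggr]
\sum_{\lambda}\frac{H_{\lambda}\bigl[\sum_{j\geq 2}m_ju_j\bigr]}{C_{\lambda}}
\prod_{(c_1,c_2)\in\lambda}\frac{1-qt_1^{c_1}t_2^{c_2}}{1-qm_1u_1t_1^{c_1}t_2^{c_2}},
\end{gather*}
and the finiteness you need is now immediate: $H_{\lambda}\bigl[\sum_{j\geq 2}m_ju_j\bigr]$ is homogeneous of degree $|\lambda|$ in $m_2,\dots,m_l$, so only partitions with $|\lambda|\leq a_2+\cdots+a_l$ contribute to $\tilde{g}_{\tilde{\bf{a}}}$, leaving finitely many denominator factors $(1-t_1^{a}t_2^{b})$ from $C_{\lambda}$ and finitely many factors $(1-qm_1u_1t_1^{c_1}t_2^{c_2})$.

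Your account of where the $(1-m_1w')$ factors come from is also not right. In the pre-swap expression, $\prod_{(c_1,c_2)\in\lambda}(1-m_1ut_1^{c_1}t_2^{c_2})$ is a polynomial in $m_1$ and $C_{\lambda}^{-1}$ is independent of $m_1$, so no individual $\lambda$-summand has a pole in $m_1$; there are no ``cross terms'' producing poles at $m_1ut_1^{a}t_2^{b}=1$. The $m_1$-poles are created only by resumming the infinite family of $\lambda$-terms, which is exactly what the swap accomplishes: after the symmetry they arise from $\Exp\bigl[-(q-qm_1u_1)\sum_{(c_1,c_2)\in\lambda}t_1^{c_1}t_2^{c_2}\bigr]$, whose negative part $-qm_1u_1$ contributes the denominators $1-qm_1u_1t_1^{c_1}t_2^{c_2}$.
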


Proposition~\ref{lbequivdesc} is a consequence of (\ref{lbrpart}), whose proof is the subject of Section~\ref{lbdescpf}.

Now, let $S$ be a projective surface and fix $\alpha_1,\dots,\alpha_l\in K(S)$ such that $\alpha_1$ is the class of a~line bundle. Again, assume that $\alpha_j$ is of rank $r_j$ on all components of $S$.

Let $\tilde{f}_{\tilde{\bf{a}}}(q,m_1)\in \Q[[q,m_1]]$ be the collection of series for which
\begin{gather*}
\hat{Z}_S(\alpha_1,\dots,\alpha_l)=\frac{(1-qm_1)^{\chi(\alpha_1)}} {(1-q)^{\chi(\mathcal{O}_S)}}\sum_{\tilde{\bf{a}}} m_2^{a_2}\cdots m_l^{a_l}\tilde{f}_{\tilde{\bf{a}}}(q,m_1).
\end{gather*}

\begin{Corollary}
The series $\tilde{f}_{\tilde{\bf a}}(q)$ is the expansion in $q$ and $m_1$ of a rational function whose denominator is a power of $(1-qm_1)$.
\end{Corollary}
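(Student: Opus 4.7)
The plan is to imitate the proof of Corollary~\ref{projdesc}, with Proposition~\ref{lbequivdesc} replacing Proposition~\ref{equivdesc}. First I reduce to the toric equivariant case. By the universal factorization~(\ref{univeq}), each coefficient of $\tilde{f}_{\tilde{\bf a}}(q,m_1)$ in its expansion is a universal polynomial in the Chern data~(\ref{cherndata}). The claim that $\tilde{f}_{\tilde{\bf a}}$ is the expansion of a rational function with denominator a power of $(1-qm_1)$ is equivalent to a family of polynomial identities on these coefficients, which hold in general iff they hold for $S$ toric with $T$-equivariant $\alpha_1,\dots,\alpha_l$ (with $\alpha_1$ still a line bundle, as this is an open condition preserved by equivariant test cases).

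In the toric case, equivariant localization on $S$ gives the factorization (\ref{toricfact}), and I apply Proposition~\ref{lbequivdesc} at each fixed point $s_i$. Since $\alpha_1$ is a line bundle, each $u_i=\chi((\alpha_1)_i|_0)$ is a single $T$-weight, so the relevant prefactor $\Exp\bigl[(q-qm_1 u_i)/((1-w_{i_1})(1-w_{i_2}))\bigr]$ is of the form produced by Proposition~\ref{lbequivdesc}. The manipulation of (\ref{prefeq}), applied simultaneously to the equivariant localization formulas $\chi(\mathcal{O}_S) = \sum_i 1/((1-w_{i_1})(1-w_{i_2}))|_{t_1=t_2=1}$ and $\chi(\alpha_1) = \sum_i u_i/((1-w_{i_1})(1-w_{i_2}))|_{t_1=t_2=1}$, shows that the product of the prefactors across the fixed points specializes at $t_1=t_2=1$ to $(1-qm_1)^{\chi(\alpha_1)}/(1-q)^{\chi(\mathcal{O}_S)}$, matching the prefactor in the definition of $\tilde{f}_{\tilde{\bf a}}$.

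What remains -- and this is the main obstacle -- is to show that the $t_1=t_2=1$ specialization of the residual product of the $\tilde{g}_{\tilde{\bf a}_i,i}$ series is a rational function in $q,m_1$ whose only poles are at $qm_1=1$. By Proposition~\ref{lbequivdesc}, each $\tilde{g}_{\tilde{\bf a}_i,i}$ is a rational function in $q,m_1,t_1,t_2$ with denominator $\prod_w(1-w)\prod_{w'}(1-m_1 w')$ involving only $T$-weights. The $(1-w)$ factors vanish at $t_1=t_2=1$ individually but must cancel in the product, since $\hat{Z}_S$ is regular there on the compact surface $S$. The $(1-m_1 w')$ factors naively specialize to $(1-m_1)$, which is not a power of $(1-qm_1)$, so the argument must show that these apparent $(1-m_1)$-poles combine with the $q$-dependence of the numerator $s(t_1^\pm,t_2^\pm,q,m_1)$ across the various fixed points so as to produce only $(1-qm_1)$-factors after the product and specialization. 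Rigorously carrying this step out will likely require tracking the explicit form of $s$ from the proof of Proposition~\ref{lbequivdesc} (in particular via (\ref{lbrpart})) rather than using that proposition as a black box.
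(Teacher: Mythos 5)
Your setup — reduction to the toric equivariant case, factorization over fixed points via (\ref{toricfact}), and combining prefactors by the analogue of (\ref{prefeq}) (which is exactly (\ref{lbprefeq}) in the paper) — matches the paper's proof. But you stop at what you call ``the main obstacle,'' and that obstacle is precisely the content of the corollary, so as written the proof is incomplete. The good news is that your diagnosis of where the resolution lives is correct, and the step is much easier than you fear: you do not need to track any cancellation between $(1-m_1)$-poles and the $q$-dependence of the numerator $s$. The point is that the statement of Proposition~\ref{lbequivdesc} is a slightly loose paraphrase of what its proof actually establishes. Equation (\ref{lbrpart}) exhibits $\sum_{\tilde{\bf a}} m_2^{a_2}\cdots m_l^{a_l}\tilde g_{\tilde{\bf a}}$ with denominator factors of the form $1-qm_1u_1t_1^{c_1}t_2^{c_2}$, i.e., $1-qm_1v'$ with $v'$ a $T$-weight — the $q$ is already bound to the $m_1$ in each factor. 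Consequently, after putting the product over the fixed points over a common denominator, one has $\tilde f_{\tilde{\bf a}} = r\big(t_1^{\pm},t_2^{\pm},q,m_1\big)\big/\big(\prod_v(1-v)\prod_{v'}(1-qm_1v')\big)\big|_{t_1=t_2=1}$, and each factor $1-qm_1v'$ specializes to $1-qm_1$ on the nose. This is exactly how the paper concludes.

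There is a second, smaller point you gloss over: to evaluate the specialization you must justify splitting the limit as
\begin{gather*}
\frac{r}{\prod_v (1-v)\prod_{v'}(1-qm_1v')}\bigg|_{t_1=t_2=1}=\frac{r}{\prod_v (1-v)}\bigg|_{t_1=t_2=1}\cdot \frac{1}{\prod_{v'}(1-qm_1)} .
\end{gather*}
Saying that the $(1-v)$-poles ``must cancel in the product, since $\hat Z_S$ is regular there'' only tells you the full quotient is regular at $t_1=t_2=1$; you need the quotient $r/\prod_v(1-v)$ \emph{by itself} to be (coefficientwise in $q,m_1$) a Laurent polynomial in $t_1,t_2$ before you may pull the $(1-qm_1v')$-factors out of the specialization. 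The paper does this by an induction on the degree $k+k'$ of the $q^km_1^{k'}$-coefficients, peeling off the $(1-qm_1v')$-factors order by order. You should include such an argument rather than appealing only to regularity of the total product.
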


\begin{proof}
Again, it suffices to prove the corollary for toric $S$ and torus equivariant $\alpha_1,\dots,\alpha_l\in K(S)$. Let $s_i$, $w_{i_1}$, $w_{i_2}$, $U_i$ and $(\alpha_j)|_i$ be as in the proof of Corollary~\ref{projdesc}. By equivariant localization on $S$, we have \begin{align}\label{lbprefeq}
\nonumber\bigg(\prod_{i} \Exp\bigg [\frac{q-qm_1(\alpha_1)|_i}{(1-w_{i_1})(1-w_{i_2})}\bigg ] \bigg)\bigg |_{t_1=1,t_2=1}&=\Exp\bigg[\bigg(\sum_i \frac{q-qm_1(\alpha_1)|_i}{(1-w_{i_1})(1-w_{i_2})}\bigg)\bigg |_{t_1=1,t_2=1}\bigg ]
\\
&\nonumber=\Exp[q\chi(\mathcal{O}_S)-qm_1\chi(\alpha_1)]
\\
&=\frac{(1-qm_1)^{\chi(\alpha_1)}}{(1-q)^{\chi(\mathcal{O}_S)}}.
\end{align}

Now, plug equation (\ref{gtildedef}) into (\ref{toricfact}) and use (\ref{lbprefeq}) to combine prefactors. Putting the remaining expressions over a common denominator as needed, given some $\tilde{\bf{a}}$ we may write
\begin{gather*}
\tilde{f}_{\tilde{\bf{a}}}=\frac{r\big(t_1^{\pm}, t_2^{\pm},q,m_1\big)}{\prod_v (1-v)\prod_{v'} (1-qm_1v')}\bigg|_{t_1=1,t_2=1},
\end{gather*}
where $r$ is a polynomial and $v$ and $v'$ range over finitely many $T$-weights. In particular, if the rational function
\begin{align}\label{rseries}
\frac{r\big(t_1^{\pm}, t_2^{\pm},q,m_1\big)}{\prod_v (1-v)\prod_{v'} (1-qm_1v')}
\end{align}
is expanded in positive powers of $m_1$ and $q$, then each $q^{k}m_1^{k'}$-coefficient of the resulting series is well defined under the specialization $t_1=1$, $t_2=1$. We conclude that each $q^{k}m_1^{k'}$-coefficient of (\ref{rseries}) has no poles of the form $1-v$. By induction on the degree $k+k'$, it follows that each $q^{k}m_1^{k'}$-coefficient of the quotient
\begin{gather*}
\frac{r\big(t_1^{\pm}, t_2^{\pm},q,m_1\big)}{\prod_v (1-v)}
\end{gather*}
also has no poles of the form $1-v$ and is therefore a Laurent polynomial in $t_1$ and $t_2$. It follows that
\begin{align*}
\tilde{f}_{\tilde{\bf{a}}}&=\frac{r\big(t_1^{\pm}, t_2^{\pm},q,m_1\big)}{\prod_v (1-v)}\Big|_{t_1=1,t_2=1}\cdot \frac{1}{\prod_{v'} (1-qm_1v')}\Big|_{t_1=1,t_2=1}
\\
&=\frac{r\big(t_1^{\pm}, t_2^{\pm},q,m_1\big)}{\prod_v (1-v)}\Big|_{t_1=1,t_2=1}\cdot \frac{1}{\prod_{v'} (1-qm_1)}.\tag*{\qed}
\end{align*}
\renewcommand{\qed}{}
\end{proof}

\section{A Macdonald identity}\label{equivproof}

\subsection{Plethystic notation}\label{Macdonald}

Proposition~\ref{equivdesc} follows from a slight modification of a Macdonald polynomial identity obtained in~\cite[Section~7]{M}, where the identity is applied to find symmetries among conjectural expressions for mixed Hodge polynomials of certain character varieties. We recall this identity following the presentation in~\cite{M}.

Let $p_n$ denote the $n$-th power sum and let $\Sym$ denote the completion of the ring
\begin{gather*}
\Q(t_1,t_2)[p_1,p_2,\ldots]
\end{gather*}
of symmetric functions over $\Q(t_1,t_2)$ with respect to degree. We use the following ``plethystic notation''. Let
\begin{gather*}
X=\sum_{{\bf k}} c_{\bf k} {\bf x}^{\bf k}
\end{gather*}
denote a Laurent series where each $c_{\bf k}\in \Q$ and each ${\bf x}^{\bf k}$ is a Laurent monomial with coefficient~1 in $t_1$, $t_2$ and the additional variables $q, m_1,m_2,\dots$. Then, we set
\begin{gather*}
p_n[X]=\sum_{{\bf k}} c_{\bf k} \big({\bf x}^{\bf k}\big)^n.
\end{gather*}
For arbitrary $F\in \Sym$, the value of the expression $F[X]$ is defined by stipulating that the assignment $F\mapsto F[X]$ is a ring homomorphism. The plethystic exponential $\Exp$ is defined as \begin{align}\label{expdef}
\Exp[X]=\exp\bigg(\sum_{n=0}^{\infty} \frac{p_n}{n}[X]\bigg)=1+p_1[X]+\frac{p_2+p_1^2}{2}[X]+\cdots.
\end{align}
Note that
\begin{gather*}
\Exp[X+Y]=\Exp[X]\cdot \Exp[Y].
\end{gather*}
In particular, if $x_i$ and $y_j$ are Laurent monomials with coefficient $1$, then
\begin{gather*}
\Exp\bigg[\sum_{i} x_i-\sum_{j} y_j \bigg]= \frac{\prod_{j}(1-y_j)}{\prod_{i}(1-x_i)},
\end{gather*}
where infinite products are taken in a suitable completion.

\subsection{Macdonald polynomials}

For a Young diagram $\lambda$, let $H_{\lambda}\in \Sym$ denote the corresponding Macdonald polynomial as normalized, for example, in~\cite[equation~(11)]{GH}.
Equivalent definitions may be found in~\cite[Theorem~2.2]{GH} and~\cite[Definition 4.1]{M}; note that the Macdonald polynomials are denoted $\tilde{H}_{\lambda}$ in~\cite{GH,GHT}.

We summarize the relevant properties of the symmetric polynomials $H_{\lambda}$.
\begin{itemize}\itemsep=0pt
\item The polynomial $H_{\lambda}$ is homogeneous of degree $|\lambda|$.
\item The polynomials $H_{\lambda}$ form a $\Q(t_1,t_2)$-basis of the space of symmetric functions.
\item If $x$ is a Laurent monomial with coefficient 1, then by~\cite[Corollary~2.1]{GH}, \begin{align}\label{specialvalues2}
 H_{\lambda}[1-x]&=\Exp\biggl[{-}x \sum_{(c_1,c_2)\in \lambda} t_1^{c_1}t_2^{c_2}\biggr]= \prod_{(c_1,c_2)\in \lambda} \big(1-xt_1^{c_1}t_2^{c_2}\big).
 \end{align}
In particular,
\begin{gather} \label{specialvalues1}
H_{\lambda}[1]=1,\qquad
H_{\lambda}[-1]=(-1)^{|\lambda|}\prod_{(c_1,c_2)\in \lambda} t_1^{c_1}t_2^{c_2}.
\end{gather}

\item The ring $\Sym$ carries the Macdonald scalar product $\langle\,,\rangle_{*}$ satisfying
\begin{align*}\nonumber
\langle H_{\lambda}, H_{\mu}\rangle_* = \delta_{\lambda,\mu}\cdot H_{\lambda}[-1]\cdot C_{\lambda},
\end{align*}
where
\begin{gather*}
C_{\lambda}= \prod_{\square\in\lambda}{\big(1- t_1^{l(\square)+1}t_2^{-a(\square)}\big)\big(1-t_1^{-l(\square)}t_2^{a(\square)+1}\big)}
\end{gather*}
denotes the Laurent polynomial (\ref{Clambdadef}).
In particular, the scalar product $\langle\,,\rangle_{*}$ has reproducing kernel \begin{gather}
\label{cauchy}
\sum_{\lambda}\frac{H_{\lambda}[X] H_{\lambda}[Y]} {H_{\lambda}[-1] \cdot C_{\lambda}} .
\end{gather}
\end{itemize}

\subsection{A plethystic symmetry}
We will deduce Proposition~\ref{equivdesc} from a special case of the following symmetry.

\begin{Proposition}[{cf.~\cite[Section~7]{M}}]\label{Macsym}
For Laurent series $X$ and $Y$, the expression
\begin{align}\label{Macid}
\Exp\bigg[\frac{Y}{(1-t_1)(1-t_2)}\bigg]\sum_{\lambda}\frac{H_{\lambda}[X]}{C_{\lambda}} \Exp\biggl[{-}Y \sum_{(c_1,c_2)\in \lambda} t_1^{c_1}t_2^{c_2}\biggr]
\end{align}
is symmetric under exchange of $X$ and $Y$.
\end{Proposition}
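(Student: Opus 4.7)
The strategy is to derive (\ref{Macid}) from the reproducing kernel identity (\ref{cauchy}), following the approach of \cite[Section~7]{M}. The starting point is the manifest symmetry of the kernel
\[
K(X,Y) := \sum_\lambda \frac{H_\lambda[X]H_\lambda[Y]}{H_\lambda[-1]C_\lambda},
\]
which, by the Garsia--Haiman Cauchy identity, has the explicit closed form $K(X,Y) = \Exp[-XY/((1-t_1)(1-t_2))]$ and is therefore trivially symmetric in $X$ and $Y$.

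The key link between $K(X,Y)$ and (\ref{Macid}) is the plethystic special value (\ref{specialvalues2}), $H_\lambda[1-y] = \Exp[-y \sum_{(c_1,c_2)\in\lambda} t_1^{c_1}t_2^{c_2}]$ for a Laurent monomial $y$. Substituting $Y \mapsto 1-y$ in the Cauchy identity therefore introduces the plethystic exponential factor appearing in (\ref{Macid}), and the right-hand side factors as $K(X,1-y) = \Exp[-X/((1-t_1)(1-t_2))] \cdot \Exp[Xy/((1-t_1)(1-t_2))]$. After rearrangement, the resulting identity matches (\ref{Macid}) up to clearing the $H_\lambda[-1]$ factor from the denominator; I would accomplish this by a second plethystic shift that produces the compensating prefactor $\Exp[Y/((1-t_1)(1-t_2))]$. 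The symmetry of (\ref{Macid}) is then inherited directly from that of $K(X,Y)$.

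The main obstacle is the bookkeeping of this second plethystic shift: identifying precisely the substitution that absorbs the asymmetric $H_\lambda[-1] = (-1)^{|\lambda|} \prod_{(c_1,c_2)\in\lambda}t_1^{c_1}t_2^{c_2}$ factor while preserving the Cauchy-kernel structure. A further subtlety is the passage from a single-monomial argument $y$ to a general Laurent series $Y$, since $H_\lambda[1-Y] \neq \Exp[-Y\sum_{(c_1,c_2)\in\lambda}t_1^{c_1}t_2^{c_2}]$ beyond the monomial case. This extension requires interpreting (\ref{Macid}) as an identity of formal series in the plethystic generators $p_n[X]$ and $p_n[Y]$ and verifying it coefficient by coefficient, following the template of \cite[Section~7]{M}.
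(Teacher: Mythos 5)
Your starting point---the symmetry of the reproducing kernel (\ref{cauchy})---is the same as the paper's, but the mechanism you propose for passing from the kernel to (\ref{Macid}) does not work, and the step you defer (``a second plethystic shift that produces the compensating prefactor \dots{} and absorbs $H_{\lambda}[-1]$'') is exactly where the real content lies. There is no plethystic substitution that sends $H_{\lambda}$ to $H_{\lambda}[-1]\cdot H_{\lambda}$: the object needed here is the operator $\nabla$, which is \emph{defined} by its diagonal action on the Macdonald basis and is not a plethysm. More fundamentally, applying a substitution to one argument of a symmetric kernel only yields the tautology $K(\phi(X),Y)=K(Y,\phi(X))$; it cannot produce the asymmetric-looking expression (\ref{Macid}) and then certify its hidden symmetry. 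What the paper actually does is apply the operator $\nabla U^{*}U\nabla$, where $(UF)[X]=F[1+X]$ and $(U^{*}F)[X]=\Exp[-X/((1-t_1)(1-t_2))]\,F[X]$, to \emph{one} argument of the reproducing kernel. Two nontrivial external inputs are then required: first,~\cite[Theorem~I.3]{GHT}, which states that $(\nabla U^{*}U)H_{\lambda}[X]=\Exp[X/((1-t_1)(1-t_2))]\,\Exp\bigl[-X\sum_{(c_1,c_2)\in\lambda}t_1^{c_1}t_2^{c_2}\bigr]$---this is what converts the kernel into (\ref{Macid}), with $\nabla$ cancelling the $H_{\lambda}[-1]$ in the denominator; second,~\cite[Proposition~1.11b]{GHT}, which says $U$ and $U^{*}$ are mutually adjoint for $\langle\,,\rangle_{*}$, so that $\nabla U^{*}U\nabla$ is self-adjoint and hence acts identically on either argument of the reproducing kernel. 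Neither input appears in your outline, and without them the symmetry does not follow.

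Your closing caveat about non-monomial $Y$ is also not a side issue to be disposed of ``coefficient by coefficient'': the identity $H_{\lambda}[1-Y]=\Exp\bigl[-Y\sum_{(c_1,c_2)\in\lambda}t_1^{c_1}t_2^{c_2}\bigr]$ genuinely fails when $Y$ is not a single monomial, so the substitution $Y\mapsto 1-y$ cannot even serve as the correct intermediate step for general arguments. In the paper's argument this problem never arises, because~\cite[Theorem~I.3]{GHT} is an identity of symmetric functions in the argument, so an arbitrary Laurent series may be substituted plethystically after the fact. To repair your proof you would need to import the two results of~\cite{GHT} above and reorganize the argument around the self-adjointness of $\nabla U^{*}U\nabla$ acting on the reproducing kernel, rather than around substitutions into its closed form.
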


\begin{proof} We slightly modify the proof from~\cite{M}, which uses the Macdonald polynomial identity~\cite[Theorem~I.3]{GHT}.
Denote by $U$, $U^*$ and $\nabla\colon \Sym\to \Sym$ the operators
\begin{gather*}
(UF)[X]=F[1+X], \qquad
(U^*F)[X]=\Exp \biggl[{-}\frac{X}{(1 - t_1)(1 - t_2)}\biggr] F[X], \\
\nabla H_{\lambda} =H_{\lambda}[-1]\cdot H_{\lambda}.
\end{gather*}

Then,~\cite[Theorem~I.3]{GHT} states that
\begin{align}\label{Videntity}
(\nabla U^*U)H_{\lambda}[X]=\Exp\bigg[\frac{X}{(1-t_1)(1-t_2)}\bigg] \Exp\biggl[{-}X \sum_{(c_1,c_2)\in \lambda} t_1^{c_1}t_2^{c_2}\biggr].
\end{align}
In other words, the composition $\nabla U^*U$ sends Macdonald polynomials to normalized $T$-weights of tautological bundles at fixed points on the Hilbert scheme.

By~\cite[Proposition 1.11b]{GHT}, the operators $U$ and $U^*$ are adjoint with respect to $\langle\,,\rangle_{*}$. Moreover, the operator $\nabla$ is self-adjoint. So, the operator $\nabla U^*U\nabla$ is self-adjoint. As (\ref{cauchy}) is the reproducing kernel for $\langle\,,\rangle_*$, we have
\begin{align}\label{preswap}
(\nabla U^* U\nabla)_{Y}\bigg[\sum_{\lambda} {\frac{H_{\lambda}[X] H_{\lambda}[Y]} {H_{\lambda}[-1]C_{\lambda}} }\bigg]=(\nabla U^*U\nabla)_{X}\bigg[\sum_{\lambda} {\frac{H_{\lambda}[X] H_{\lambda}[Y]} {H_{\lambda}[-1]C_{\lambda}} }\bigg],
\end{align}
where the subscript $X$ or $Y$ denotes action on symmetric functions taking that argument.

By (\ref{Videntity}), the left-hand side of (\ref{preswap}) equals
\begin{align*}
\Exp\bigg[\frac{Y}{(1-t_1)(1-t_2)}\bigg]\sum_{\lambda}\frac{H_{\lambda}[X]}{C_{\lambda}}\Exp\bigg[{-}Y \sum_{(c_1,c_2)\in \lambda} t_1^{c_1}t_2^{c_2}\bigg].
\end{align*}
As (\ref{cauchy}) is symmetric under exchange of $X$ and $Y$, the proposition follows.
\end{proof}

It would be interesting to have a geometric interpretation of Proposition~\ref{Macsym}.

\subsection{Specialization}
To prove Propositions~\ref{equivdesc} and~\ref{lbequivdesc}, we apply Proposition~\ref{Macsym} to control the series $\hat{Z}_{\C^2}(\gamma_1,\dots,\gamma_l)$. With (\ref{combZ}) in mind, for $j=1,\dots,l$, we set
\begin{gather*}
u_j=\chi(\gamma_j|_0)\in \Z\big[t_1^{\pm},t_2^{\pm}\big]
\end{gather*}
to be the $T$-character of the fiber of $\gamma_j$ at the origin $0\in\C^2$.

\subsubsection{Arbitrary descendents}\label{ardescpf}
Apply the specialization
\begin{gather*}
X=q, \qquad Y=\sum_{j=1}^{l} m_ju_j
\end{gather*}
to expression (\ref{Macid}).
By (\ref{specialvalues1}),~(\ref{specialvalues2}) and (\ref{combZ}), this specialization equals
\begin{gather}
\Exp\bigg[\frac{\sum_{j=1}^{l} m_ju_j}{(1-t_1)(1-t_2)}\bigg] \sum_{\lambda} \frac{q^{|\lambda|}}{C_{\lambda}}\Exp\Biggl[{-}\bigg(\sum_{j=1}^{l} m_ju_j\bigg) \sum_{(c_1,c_2)\in \lambda}t_1^{c_1}t_2^{c_2} \Biggr]\nonumber
\\ \qquad
{}=\Exp\bigg[\frac{\sum_{j=1}^{l} m_ju_j}{(1-t_1)(1-t_2)}\bigg] \hat{Z}_{\C^2}(\gamma_1,\dots,\gamma_l)(t_1,t_2).\label{specl}
\end{gather}
Applying Proposition~\ref{Macsym}, we find that the series (\ref{specl}) equals
\begin{align*}
\Exp\bigg[\frac{q}{(1-t_1)(1-t_2)}\bigg] \sum_{\lambda} \frac{H_{\lambda} \big(\sum_{j=1}^{l} m_ju_j\big)}{C_{\lambda}} { \prod_{(c_1,c_2)\in \lambda} \big(1-qt_1^{c_1}t_2^{c_2}\big)}.
\end{align*}

Recall the definition of $g_{\bf{a}}$ from (\ref{gdef}). We conclude that
\begin{align}\label{rpart} \nonumber
\sum_{\bf{a}} &m_1^{a_1}\cdots m_l^{a_l} g_{\bf{a}}(q)\\&= \Exp\bigg[{-}\frac{\sum_{j=1}^{l} m_ju_j}{(1-t_1)(1-t_2)}\bigg] \sum_{\lambda} \frac{H_{\lambda} \big(\sum_{j=1}^{l} m_ju_j\big)}{C_{\lambda}} { \prod_{(c_1,c_2)\in \lambda} \big(1-qt_1^{c_1}t_2^{c_2}\big)}.
\end{align}

As $H_{\lambda}$ is homogeneous of degree $|\lambda|$, only partitions $\lambda$ of size at most $a_1+\dots+a_l$ can contribute $m_1^{a_1}\cdots m_l^{a_l}$-terms to the right-hand side of (\ref{rpart}). As the largest power of $q$ that can appear in the $\lambda$-summand of (\ref{rpart}) is $|\lambda|$, Proposition~\ref{equivdesc} follows.

\subsubsection{Computation of descendent series}\label{compdesc}

For any fixed ${\bf{a}}$, equation (\ref{rpart}) yields an expression for $g_{\bf{a}}$ as a finite sum in terms of Macdonald polynomials. This expression can be used to obtain new formulas for descendent series.

For example, we prove Proposition~\ref{newdesc}. Let $\gamma\in K_T\big(\C^2\big)$ be a rank $3$ vector bundle. We may write
\begin{gather*}
\chi(\gamma|_{0})= v_1+v_2+v_3,
\end{gather*}
where $v_1$, $v_2$ and $v_3$ are monomials with coefficient 1 in $t_1^{\pm}$ and $t_2^{\pm}$. For this section, set $m=m_1$. Equation (\ref{rpart}) implies
\begin{align}\label{threeCdesc} Z_{\C^2}(\gamma)=\Exp\left[\frac{q-m(v_1+v_2+v_3)}{(1-t_1)(1-t_2)}\right] \sum_{\lambda}\frac{H_{\lambda}(m(v_1+v_2+v_3))}{C_{\lambda}}\prod_{(c_1,c_2)\in \lambda}\big(1-qt_1^{c_1}t_2^{c_2}\big).\!\!\!
\end{align}

Set
\begin{gather*}
v^{(1)} = \chi(\gamma|_0)=v_1 + v_2 + v_3, \\
v^{(2)}=\chi\big({\wedge}^2 \gamma|_0\big)=v_1v_2+v_1v_3 + v_2v_3,\\
v^{(3)} =\chi\big({\wedge}^3 \gamma|_0\big)=v_1v_2v_3.
\end{gather*}
We explicitly compute the right-hand side of (\ref{threeCdesc}) to order $3$ in $m$ using formulas for $H_{\lambda}$ as listed in~\cite[Section~7.12]{OS}, for example. After writing the result in terms of the plethystic exponential, we obtain
\begin{align}
Z_{\C^2}(\gamma)\!=\!{}&\Exp\bigg[\frac{q\!-\!mqv^{(1)}\!\!+\!m^2(q\!-\!q^2)v^{(2)}\! +\!m^3(q\!-\!q^2)\big(qv^{(2)}v^{(1)}\! \!+\!q(t_1\!+\!t_2)v^{(3)}\!\!-\!(1\!+\!q)v^{(3)}\big) }{(1\!-\!t_1)(1\!-\!t_2)} \bigg] \nonumber
\\ \qquad
&+O\big(m^4\big).\label{descexpol}
\end{align}
We rewrite (\ref{descexpol}) as
\begin{align}
Z_{\C^2}(\gamma)=\Exp&\big[ q\chi\big(\mathcal{O}_{\C^2}\big)-mq\chi(\gamma)+m^2\big(q-q^2\big)\chi\big({\wedge}^2\gamma\big)
+m^3\big(q-q^2\big)\big(q\chi\big({\wedge}^2\gamma\otimes \gamma\big)\nonumber
\\
&+q\chi\big(\mathcal{T}^*\C^2\otimes\wedge^3\gamma\big)-(1+q)\chi \big({\wedge}^3\gamma\big)\big)\big]+ O\big(m^4\big).
\label{descexgeo}
\end{align}

Now, let $S$ be a toric projective surface and $\mathcal{V}$ be a torus-equivariant rank 3 vector bundle on~$S$. As explained in Section~\ref{arbitrary}, it suffices to prove Proposition~\ref{newdesc} for such $S$ and $\mathcal{V}$. Equations (\ref{descexgeo}), and~(\ref{toricfact}) and equivariant localization imply that
\begin{align}
Z_{S}(\mathcal{V})&=\Exp\big[ q\chi(\mathcal{O}_S)-mq\chi(\mathcal{V})+m^2\big(q-q^2\big)\chi\big({\wedge}^2\mathcal{V}\big)+m^3\big(q-q^2\big) \big(q\chi\big({\wedge}^2\mathcal{V}\otimes \mathcal{V}\big)\nonumber
\\
&\hphantom{=\Exp\big[}+q\chi\big(\mathcal{T}^*S\otimes\wedge^3\mathcal{V}\big) -(1+q)\chi\big({\wedge}^3\mathcal{V}\big)\big)\big]+ O\big(m^4\big) \nonumber
\\
&=\frac{(1-mq)^{\chi(\mathcal{V})}}{(1-q)^{\chi(\mathcal{O}_S )}}\bigg(\frac{1-m^2q^2}{1-m^2q}\bigg)^{\chi(\wedge^2\mathcal{V})}\nonumber
\bigg(\frac{1-m^3q^3}{1-m^3q^2}\bigg)^{\chi(\wedge^2\mathcal{V}\otimes\mathcal{V})}
\\
&\phantom{=}\cdot\bigg(\frac{1-m^3q^3}{1-m^3q^2}\bigg)^{\chi(\mathcal{T}^*S\otimes \wedge^3\mathcal{V})}\bigg(\frac{1-m^3q}{1-m^3q^3}\bigg)^{\chi(\wedge^3\mathcal{V})}+O\big(m^4\big) .
\label{projdescexpgeo}
\end{align}
Extracting the $m^3$ term of both sides of (\ref{projdescexpgeo}) yields Proposition~\ref{newdesc}.

\subsubsection[Descendents with gamma\_1 a line bundle]{Descendents with $\boldsymbol{\gamma_1}$ a line bundle}\label{lbdescpf}

When $\gamma_1$ is the class of a line bundle, Proposition~\ref{Macsym} yields extra information about the structure of $\hat{Z}_{\C^2}(\gamma_1,\dots,\gamma_l)$.

By the homogeneity of $H_{\lambda}$ and (\ref{specialvalues2}), we have
\begin{gather*}
H_{\lambda}[q-qm_1]=q^{|\lambda|}H_{\lambda}[1-m_1]=q^{|\lambda|}\,
\Exp\biggl[-m_1\sum_{(c_1,c_2)\in\lambda}t_1^{c_1}t_2^{c_2}\biggr].
\end{gather*}
As $\gamma_1$ is a line bundle, the $T$-character $u_1=\chi(\gamma_1|_0)$ is a monomial (with coefficient 1) in $t_1^{\pm}$ and~$t_2^{\pm}$. So, applying the specialization
\begin{gather*}
X=q-qm_1u_1, \qquad Y=\sum_{j=2}^{l} m_ju_j
\end{gather*}
to expression (\ref{Macid}), we obtain
{\samepage\begin{gather}\nonumber
\Exp\Bigg[\frac{\sum_{j=2}^{l} m_ju_j}{(1-t_1)(1-t_2)}\Bigg]\sum_{\lambda} \frac{q^{|\lambda|}}{C_{\lambda}}\,\Exp\Biggl[-\bigg(\sum_{j=1}^{l} m_ju_j\bigg)\, \sum_{(c_1,c_2)\in \lambda} t_1^{c_1}t_2^{c_2} \Biggr]
\\ \qquad
{} = \Exp\Bigg[\frac{\sum_{j=2}^{l} m_ju_j}{(1-t_1)(1-t_2)}\Bigg]\hat{Z}_{\C^2}(\gamma_1,\dots,\gamma_l)(t_1,t_2) .\label{lbspecl}
\end{gather}}%
By Proposition~\ref{Macsym}, the series (\ref{lbspecl}) equals
\begin{align*}
\Exp\bigg[\frac{q-qm_1u_1}{(1-t_1)(1-t_2)}\bigg] \sum_{\lambda} \frac{H_{\lambda} \big(\sum_{j=2}^{l} m_ju_j\big)}{C_{\lambda}} \prod_{(c_1,c_2)\in \lambda} \frac{1-qt_1^{c_1}t_2^{c_2}}{1-qm_1u_1t_1^{c_1}t_2^{c_2}}.
\end{align*}

Recall the definition of $\tilde{g}_{\tilde{\bf{a}}}$ from (\ref{gtildedef}). We conclude that
\begin{align}\nonumber
\sum_{\bf{\tilde{a}}} m_2^{a_2}\cdots m_l^{a_l} \tilde{g}_{\tilde{\bf{a}}}(q,m_1)
={}&\Exp\Biggl[-\frac{\sum_{j=2}^{l} m_ju_j}{(1-t_1)(1-t_2)}\Biggr]
\\
&{}\cdot \sum_{\lambda} \frac{H_{\lambda} \big(\sum_{j=2}^{l} m_ju_j\big)}{C_{\lambda}} \prod_{(c_1,c_2)\in \lambda} \frac{1-qt_1^{c_1}t_2^{c_2}}{1-qm_1u_1t_1^{c_1}t_2^{c_2}}.
\label{lbrpart}
\end{align}
Only partitions of size at most $a_2+\dots+a_l$ can contribute $m_2^{a_2}\cdots m_l^{a_l}$-terms to the left-hand side of (\ref{lbrpart}). Proposition~\ref{lbequivdesc} follows.

\subsection*{Acknowledgements}

I thank Lothar G\"{o}ttsche, Anton Mellit and Richard Thomas for feedback and related conversations, as well as Drew Johnson, Woonam Lim, Dragos Oprea and Rahul Pandharipande for discussions and correspondence regarding Hilbert and Quot schemes. In particular, I thank Dragos Oprea for his suggestion to study the series~(\ref{lbspec}). I also thank the anonymous referees for their feedback. This work was supported by the EPSRC through grant EP/R013349/1 and the NSF through grant DMS-1902717.


\pdfbookmark[1]{References}{ref}
\LastPageEnding

\end{document}